\newlength{\originalbase}
\newcommand{\spacing}[1]{\setlength{\baselineskip}{#1\originalbase}}
\newcommand{\eps}{\varepsilon}
\newcommand{\h}{h}
\newcommand{\ul}{\underline}
\begin{document}
\spacing{1}
\newtheorem{theorem}{Theorem}[section]
\newtheorem{claim}{Claim}[theorem]
\newtheorem{prop}[theorem]{Proposition}
\newtheorem{remark}[theorem]{Remark}
\newtheorem{lemma}[theorem]{Lemma}
\newtheorem{corollary}[theorem]{Corollary}
\newtheorem{guess}[theorem]{Conjecture}
\newtheorem{conjecture}[theorem]{Conjecture}

\title{Sorting by Placement and Shift}

\author{Sergi Elizalde~\thanks{Department of Mathematics,
Dartmouth, Hanover NH 03755-3551, USA; sergi.elizalde@dartmouth.edu.}
\and
Peter Winkler~\thanks{Department of Mathematics, Dartmouth,
Hanover NH 03755-3551, USA; peter.winkler@dartmouth.edu.}}

\maketitle

\begin{abstract}
In sorting situations where the final destination of each item
is known, it is natural to repeatedly choose items and place
them where they belong, allowing the intervening items to
shift by one to make room.  (In fact, a special case of this
algorithm is commonly used to hand-sort files.)  However, it
is not obvious that this algorithm necessarily terminates.

We show that in fact the algorithm terminates after at most
$2^{n-1}-1$ steps in the worst case (confirming a conjecture of
L. Larson), and that there are super-exponentially many
permutations for which this exact bound can be achieved.
The proof involves a curious symmetrical binary representation.
\end{abstract}

\medskip

\section{The Problem}

Suppose that a permutation $\pi \in S_n$ is fixed and represented
by the sequence $\pi(1),\dots,\pi(n)$.  Any number $i$ with $\pi(i) \not= i$
may be ``placed'' in its proper position, with the numbers in positions
between $i$ and $\pi(i)$ shifted up or down as necessary.  Repeatedly
placing numbers until the identity permutation is achieved constitutes
a process we call {\em homing.}  One might imagine that the numbers are
written on billiard balls in a trough, as in Figure~\ref{fig:switch} below,
where the shift is a natural result of moving a ball to a new position.

\begin{figure}[htbp]
\epsfxsize220pt
$$\epsfbox{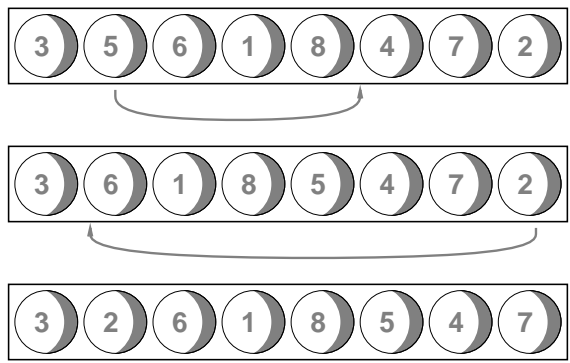}$$
\caption{Two steps in homing.}\label{fig:switch}
\end{figure}

To be precise, if $\pi(i)>i$ and $i$ is placed, the resulting permutation
$\pi'$ is given by
$$
\pi'(j)=\left\{ \begin{array}{ll}
	\pi(j) & \mbox{if $\pi(j) < i$ or $\pi(j) > \pi(i)$} \\
	i & \mbox{if $j = i$} \\
	\pi(j){-}1 & \mbox{if $i < \pi(j) < \pi(i)$}
	\end{array}
\right.
$$
and if $\pi(i)< i$, we have
$$
\pi'(j)=\left\{ \begin{array}{ll}
        \pi(j) & \mbox{if $\pi(j) < \pi(i)$ or $\pi(j) > i$} \\
        i & \mbox{if $j = i$} \\
        \pi(j){+}1 & \mbox{if $\pi(i) < j < i$}
	\end{array}
\right.
$$

The primary question we answer is: how many steps does homing
take in the worst case?

\section{History}

Despite its simplicity, homing seems not to have been considered before
in the literature; it arose recently as a result of a misunderstanding
(details below).  It is, of course, only in a loose sense a sorting algorithm
at all, since it requires that the final position of each item
be known, and presumes that it is desirable to sort ``in place.''
Thus, it makes sense primarily for physical objects.  Nonetheless, one
can imagine a situation where a huge linked list is to be sorted
in response to on-line information about where items ultimately belong;
then it may seem reasonable to place items as information is received,
allowing the items between to shift up or down by one.  {\em We do not
recommend this procedure!}

In hand-sorting files, it is common to find the alphabetically first
file and move it to the front, then find the alphabetically second
file and move it to the position behind the first file, et cetera.
This is a (fast) special case of homing.

Homing was brought to our attention by mathematician and reporter
Barry Cipra \cite{C}.  Cipra had been looking at John H. Conway's
``Topswops'' algorithm \cite{G}, in which only the leftmost number
is placed and intervening numbers are {\em reversed}.  Topswops terminates
when the 1 is in position, even if the rest of the numbers are still
scrambled.  Seeking to get everything in order, Cipra considered allowing
any not-at-home number to be placed, again reversing the intervening
numbers.  This algorithm does not necessarily terminate, however; a cycling
example ($7132568{\ul 4} \to {\ul 7}1348652 \to 5684317{\ul 2} \to
5271{\ul 3}486 \to 5231{\ul 7}486 \to 7132548{\ul 6} \to 71325684$)
was provided to Cipra by David Callan, of the University of Wisconsin \cite{Ca}.

When Cipra tried to explain his interest to his friend Loren Larson
(co-author of {\em The Inquisitive Problem Solver} \cite{VGL}) the latter
thought that the intervening items were to be shifted.  Cipra liked
the new procedure, especially as he was able to show it did always
terminate, and designed a game around it.  The game involved sorting
vertical strips of famous paintings (such as Picasso's {\em Guernica});
Cipra called it ``PermutARTions.'' A prototype of the game, renamed
``Picture This,'' has since been made by puzzle designer Oskar van Deventer.

The neatest proof known to us that homing always terminates is due to
Noam Elkies \cite{E}.  Since there are only finitely many states, non-termination
would imply the existence of a cycle; let $k$ be the largest number which
is placed {\em upward} in the cycle.  (If no number is placed upward, the
lowest number placed downward is used in a symmetric argument).  Once
$k$ is placed, it can be dislodged upward and placed again downward,
but nothing can ever push it below position $k$.  Hence it can never
again be placed upward, a contradiction.

\section{Outline}

In Section~\ref{sec:fast} we will consider fast homing, that is, the minimum
number of steps needed to sort from a given permutation $\pi$.  Among other
things we will see that homing can always be done in at most $n{-}1$ steps
(with a single worst-case example), and that there is an easy sequence of
choices which respects that bound.

In Section~\ref{sec:slow} we prove that homing cannot take more than
$2^{n-1}-1$ steps; in Section~\ref{sec:count}, we show that
there are super-exponentially many permutations which can support
exactly $2^{n-1}-1$ steps.

Finally, in Section~\ref{sec:final}, we wrap up and conclude with some
open questions.

\section{Fast Homing}\label{sec:fast}

A placement of either the least or the greatest number not currently in its home will be
called {\em extremal}; such a number $i$ will never subsequently be dislodged
from its home, since no other number will ever cross $i$ on its way.  Hence,

\begin{theorem}\label{thm:smallest} Any algorithm that always places the
smallest or largest available number will terminate in at most $n{-}1$ steps.
\end{theorem}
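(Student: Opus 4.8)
The plan is to follow the hint already recorded in the remark preceding the theorem: an extremal placement homes a number permanently. So I would track the set $F_t$ of numbers that, after $t$ steps, occupy their home position and will never leave it during the rest of the run. Every step of an ``always place the smallest-or-largest'' algorithm is an extremal placement, so it adds to $F_t$ the number it just placed --- a number that was \emph{not} at home a moment earlier, hence not already in $F_t$. Thus $|F_t|$ strictly increases with $t$; since $|F_t|\le n$ and since $|F_t|\ge n-1$ would already force the identity (a permutation with at most one non-fixed point is the identity), the run can have at most $n-1$ steps.

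The one thing that needs genuine checking is the remark itself, namely that placing the smallest (resp.\ largest) not-at-home number really fixes it forever. First I would record the monotonicity of sorted segments: if at some moment the numbers $1,\dots,k$ all occupy positions $1,\dots,k$, then they remain there after any later placement, because that placement moves some not-at-home number $v\ge k+1$ whose current position is also $\ge k+1$, so the block of shifted positions lies entirely in $\{k+1,\dots,n\}$; a symmetric statement holds for a sorted final block $\{n-\ell+1,\dots,n\}$. Now, when the maximal sorted initial segment is $\{1,\dots,k\}$, the smallest not-at-home number is precisely $k+1$, and placing it makes $\{1,\dots,k+1\}$ sorted; by monotonicity it stays sorted, so $k+1$ is home for good. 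The largest case is symmetric. This both justifies the remark and shows that each extremal step enlarges a sorted end-segment, which is the concrete reason $|F_t|$ grows.

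To make the count airtight I would phrase it via $a_t,b_t$, the lengths of the maximal sorted initial and final segments after $t$ steps: both are nondecreasing, each step increases $a_t+b_t$ by at least $1$, and $a_t+b_t\le n-2$ whenever the current permutation is not the identity (if $a_t+b_t\ge n-1$ the two segments cover all but at most one position, forcing the identity). Applying this at the last non-identity state $t=T-1$, where $T$ is the total number of steps, gives $T-1\le a_{T-1}+b_{T-1}\le n-2$, hence $T\le n-1$.

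I do not expect a real obstacle here --- the remark does the substantive work. The only mild subtleties are (i) verifying that the sorted initial/final segments never shrink, which is the disjoint-support observation above, and (ii) the off-by-one that turns ``at most $n$'' into ``at most $n-1$,'' which comes from a non-identity permutation having at least two non-fixed points. As a sanity check, under the smallest-first rule the permutation $\pi=n,1,2,\dots,n-1$ advances its sorted initial segment by exactly one per step and so needs the full $n-1$ steps, matching the ``single worst-case example'' mentioned in the outline.
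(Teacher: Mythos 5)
Your proof is correct and follows essentially the same route as the paper: the paper's one-line proof rests on the preceding remark that an extremal placement homes a number permanently, and your sorted end-segment invariant (the $a_t,b_t$ bookkeeping) is simply a careful justification of that remark together with the observation that a permutation cannot have exactly one non-fixed point. No gaps; you have just made explicit what the paper leaves implicit.
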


\begin{proof} After $n{-}1$ numbers are home, the $n$th must be as well.
\end{proof}

The algorithm which places the smallest not-at-home number is the one cited above,
often used to hand-sort files.  The precise number of steps required is the smallest
$j$ such that the files which belong in positions $j{+}1,j{+}2,\dots,n$
are already in the correct order.

Suppose placements are {\em random}, that is, at each step a uniformly random number
is chosen from among those that are not at home and then placed.  Let us say that
a permutation is in ``stage $k$'' if $k$ (but not $k{+}1$) of the extremal numbers are
home; thus, e.g., 1,2,3,7,4,6,5,8,9 is in stage 5, since 1, 2, 3, 8 and 9 are home.
There is no stage $n{-}1$. If $\pi$ is in stage $k$, for $k < n$, then with probability at least $2/(n{-}k)$,
the next placement will leave it in stage $k{+}1$ or higher.  It follows that
the expected number of placements needed to move up from stage $k$ is at most $(n{-}k)/2$,
and thus the total expected number of random placements needed to sort a permutation
cannot exceed $\sum_{k=0}^{n-2} \frac12 (n{-}k)$.  We conclude:

\begin{theorem} The expected number of steps required by random homing from $\pi \in S_n$
is at most $\frac14 (n(n{+}1)-2)$.
\end{theorem}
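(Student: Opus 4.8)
The plan is to turn the stage-counting argument sketched above into a rigorous proof via three ingredients: stages never decrease; from a permutation in stage $k<n$ a single random placement advances the stage with probability at least $2/(n-k)$; and a geometric-waiting-time estimate. First I would fix notation. For $\sigma\in S_n$ let $a(\sigma)$ be the largest $a$ with $\sigma(j)=j$ for all $j\le a$, let $b(\sigma)$ be the largest $b$ with $\sigma(j)=j$ for all $j\ge n-b+1$, and set $k(\sigma)=a(\sigma)+b(\sigma)$, the \emph{stage} of $\sigma$; thus $k(\sigma)=n$ precisely when $\sigma$ is the identity, whereas $k(\sigma)=n-1$ cannot occur since a permutation fixing all but one point fixes that point too --- this is the ``no stage $n-1$'' remark. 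The structural fact to establish, a generalization of ``an extremal placement is never undone'', is that a placement cannot disturb an already-correct prefix or suffix: if $\sigma(j)=j$ for all $j\le a$, then any not-at-home number, and its home position, both lie in $\{a+1,\dots,n\}$, so placing it shifts only entries in positions $>a$ and leaves the first $a$ entries fixed; symmetrically at the top. Hence $a(\cdot)$, $b(\cdot)$, and so $k(\cdot)$, are non-decreasing along any homing sequence; I would verify this directly against the displayed update rules for $\pi'$.

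For the probabilistic step, let $\sigma$ be in stage $k<n$, so exactly $n-k$ numbers are not at home. Maximality of the correct prefix and suffix forces the smallest not-at-home number to be $a+1$ and the largest to be $n-b$, and $a+1\ne n-b$ because $a+1=n-b$ would give $k=n-1$. Placing the number $a+1$ sends it home to position $a+1$ and, by the ``no disturbance'' fact together with $a+1\le n-b$, changes no position $\le a$ and no position $>n-b$; hence the resulting permutation has stage at least $(a+1)+b=k+1$, and symmetrically for placing $n-b$. Thus at least two of the $n-k$ equally likely placements raise the stage, so, conditionally on the entire past, the probability that the next placement leaves stage $k$ is at least $2/(n-k)$.

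To conclude, let $N_k$ denote the number of placements performed while the permutation is in stage exactly $k$. By monotonicity the stages visited form an increasing run within $\{0,1,\dots,n-2\}$, and the total number of placements is $\sum_k N_k$. The conditional bound gives $P[N_k>m]\le(1-2/(n-k))^m$ for all $m\ge0$ by induction on $m$, hence $\E[N_k]=\sum_{m\ge0}P[N_k>m]\le(n-k)/2$. Summing over all possible stages, $\E[\text{number of placements}]\le\sum_{k=0}^{n-2}\tfrac12(n-k)=\tfrac12\big(\tfrac{n(n+1)}2-1\big)=\tfrac14\big(n(n+1)-2\big)$, with the worst case of this bound attained by permutations that have no fixed prefix or suffix.

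The only genuinely delicate steps are the monotonicity fact and the ``two advancing placements'' claim: confirming from the update rules that a placement really cannot dislodge a correct prefix or suffix, and that the two extremal not-at-home numbers are distinct and that placing either strictly raises the stage. After that, the geometric tail estimate and the arithmetic of the sum are routine, so I expect the bulk of the write-up to be the careful ``no disturbance'' verification.
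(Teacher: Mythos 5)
Your proposal is correct and follows exactly the argument the paper gives (in the paragraph preceding the theorem): the stage $k=a+b$ counting the correct prefix and suffix, monotonicity of the stage under placements, the bound $2/(n-k)$ on advancing because the two extremal not-at-home numbers go home and stay home, the geometric estimate $\E[N_k]\le (n-k)/2$, and the sum $\sum_{k=0}^{n-2}\tfrac12(n-k)=\tfrac14(n(n+1)-2)$. You simply supply the routine verifications (no disturbance of a correct prefix/suffix, distinctness of the two extremal numbers, the tail bound) that the paper leaves implicit.
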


We now return our attention to well-chosen steps, seeking a lower bound.

\begin{theorem}\label{thm:subseq} Let $k$ be the length of the longest increasing
subsequence in $\pi$.  Then no sequence of fewer than $n{-}k$ placements can sort $\pi$.
\end{theorem}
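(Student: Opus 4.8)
The plan is to follow a single monovariant, the length of the longest increasing subsequence, and to show that one placement can increase it by at most one. Granting this, the theorem is immediate: the identity permutation has longest increasing subsequence of length $n$, while $\pi$ has one of length only $k$, so turning $\pi$ into the identity requires at least $n-k$ placements.

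To prove the monovariant claim I would first record the effect of a placement on the one-line word $\pi(1)\cdots\pi(n)$: it removes the placed number from the word, reinserts it at its home slot, and (reading off the displayed formulas for $\pi'$) at most shifts the labels of the intervening entries up or down by one. The crucial structural point is that, apart from the placed number itself, \emph{every} entry keeps its relative order with respect to \emph{every} other entry: the intervening entries are translated together and their values stay a consecutive block, so they remain internally order-isomorphic and also retain all order relations to entries outside that block, while the remaining entries are untouched in both value and relative position. Thus, writing $F$ for the set of all entries except the placed number, the subword of $\pi'$ supported on $F$ is order-isomorphic, entry for entry, to the subword of $\pi$ supported on $F$.

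Given this, the claim follows in one line. Let $L'$ be a longest increasing subsequence of $\pi'$. If $L'$ avoids the placed number, it corresponds to an increasing subsequence of $\pi$ of the same length. If $L'$ contains the placed number, deleting that single term leaves an increasing subsequence supported on $F$, hence one of $\pi$ of length $|L'|-1$. Either way $\pi$ has an increasing subsequence of length at least $|L'|-1$, so one placement raises the length of the longest increasing subsequence by at most one, as claimed.

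The only step that needs real care is the structural one: reading off from the formulas for $\pi'$ exactly which single entry is relocated and which entries make up the uniformly shifted block, and then checking that this really does preserve every order relation among the non-relocated entries. This is a short case check, with $\pi(i)>i$ and $\pi(i)<i$ symmetric; once it is in place, the longest-increasing-subsequence bookkeeping above finishes the argument.
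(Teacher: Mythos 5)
Your proposal is correct. It rests on the same key observation as the paper---a placement changes the relative order of the placed entry only, so the subword on all other entries of $\pi'$ is order-isomorphic to the corresponding subword of $\pi$---but you package it differently: you turn it into a per-step monovariant, showing that one placement can raise the length of the longest increasing subsequence by at most $1$, and then count from $k$ up to $n$. The paper instead uses the order-preservation fact globally and in one shot: if fewer than $n-k$ placements sorted $\pi$, then at least $k+1$ entries are never placed, these keep their original relative order throughout the whole process, and since they end up in increasing order in the identity they already formed an increasing subsequence of length $k+1$ in $\pi$, a contradiction. The paper's version is slightly more economical (no per-step lemma is needed, and nothing must be verified about how the shifted block moves beyond ``untouched entries keep their order''), while your monovariant yields a bit more information, namely a bound on the longest increasing subsequence after every intermediate placement. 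One cosmetic caution: you describe the intervening entries as having their ``labels'' shifted by one, following the displayed formulas; in the actual process values are unchanged and only positions shift, but your conclusion (order-isomorphism of the non-placed subword) holds either way, so this does not affect the argument.
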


\begin{proof}
Otherwise there are $k{+}1$ numbers which are never placed, and thus remain in their
original order; but that order cannot be correct, else it would constitute an
increasing subsequence of length $k{+}1$ in $\pi$.
\end{proof}

\begin{corollary} The reverse permutation $n,n{-}1,\dots,1$ requires $n{-}1$ steps.
\end{corollary}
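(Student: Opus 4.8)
The plan is to sandwich the answer between the lower bound of Theorem~\ref{thm:subseq} and the upper bound of Theorem~\ref{thm:smallest}, so that the two meet at $n-1$. First I would pin down the relevant parameter: in the reverse permutation $n,n{-}1,\dots,1$ the entries strictly decrease as the position increases, so no two of them occur in increasing order, and hence the longest increasing subsequence has length $k=1$. Feeding $k=1$ into Theorem~\ref{thm:subseq} immediately yields that no sequence of fewer than $n{-}1$ placements can sort this permutation; this is the lower bound.

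For the matching upper bound I would simply invoke Theorem~\ref{thm:smallest}: the strategy that always places the smallest (equivalently, the largest) not-at-home number terminates in at most $n{-}1$ steps on \emph{any} permutation, in particular on the reverse permutation. Combining the two bounds shows that homing $n,n{-}1,\dots,1$ takes exactly $n{-}1$ steps, and that no cleverness in the choice of placements can do better.

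There is no real obstacle here; the only point needing care is the bookkeeping in translating "fewer than $n{-}k$" into "at least $n{-}1$" when $k=1$, and checking that the reverse permutation genuinely admits no increasing subsequence of length $2$. If one preferred an argument independent of Theorem~\ref{thm:subseq}, one could instead note directly that each placement fixes the position of only the single number placed while the other not-at-home numbers retain their relative order, which in the reverse permutation is the wrong order until only one number remains; but leaning on the two preceding theorems is the shortest route.
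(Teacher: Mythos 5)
Your argument is correct and is exactly the paper's intended one: the corollary is stated as an immediate consequence of Theorem~\ref{thm:subseq} with $k=1$ (the reverse permutation has no increasing subsequence of length $2$), with the matching upper bound of $n-1$ already supplied by Theorem~\ref{thm:smallest}. No issues.
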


Since \cite{LS,VK} the mean length of the longest increasing subsequence of a
{\em random} $\pi \in S_n$ is asymptotically $2\sqrt{n}$, we can deduce from
Theorem~\ref{thm:subseq} that a random permutation requires, on average, at
least $n-2\sqrt{n}$ steps to sort.

In general, $n$ minus the length of the longest increasing subsequence is not
enough steps to sort $\pi$.  An example (the only example for $n=5$) is provided
by the permutation 41352, which cannot be sorted using only two placements.

\begin{theorem} The reverse permutation is the only case requiring $n{-}1$ steps.
\end{theorem}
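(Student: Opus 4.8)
The plan is to prove the contrapositive: every $\pi \in S_n$ that is not the reverse permutation $\rho_n = n, n{-}1, \dots, 1$ can be homed in at most $n{-}2$ well-chosen steps. Together with the fact (already shown) that $\rho_n$ does need $n{-}1$ steps, this gives the theorem. I would induct on $n$; the cases $n \le 2$ are trivial ($S_1$ is a point and in $S_2$ only $\rho_2 = 21$ uses a step), so fix $n \ge 3$ and assume the result for $n{-}1$.

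The engine of the argument is a refinement of Theorem~\ref{thm:smallest} for the algorithm $\mathcal A$ that always places the smallest number not yet at home, namely: \emph{if the first entry of $\pi$ is not $n$, then $\mathcal A$ sorts $\pi$ in at most $n{-}2$ steps}. To prove this I would establish the invariant (by induction on $k$) that after $\mathcal A$ has dealt with $1, 2, \dots, k$ --- placing each that is out of home, skipping each already home --- the numbers $1,\dots,k$ sit in positions $1,\dots,k$ and $k{+}1,\dots,n$ sit in positions $k{+}1,\dots,n$ \emph{in their original relative order}. The tail's order is preserved because when $\mathcal A$ places a number $j$, the numbers $1,\dots,j{-}1$ are already parked in positions $1,\dots,j{-}1$, so $j$ lies to the right of its home and slides leftward past a block consisting solely of numbers larger than $j$; this leaves $1,\dots,j{-}1$ untouched and merely translates the block rigidly. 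From the invariant, $\mathcal A$ places the number $k$ (for $1 \le k \le n{-}1$) exactly when $k$ is not the earliest of $\{k,k{+}1,\dots,n\}$ in $\pi$, and $\mathcal A$ never needs to move $n$. Hence the number of steps $\mathcal A$ takes equals the number of $k \in \{1,\dots,n{-}1\}$ that are preceded in $\pi$ by some larger value; this is at most $n{-}1$ (which we already knew), and it drops to at most $n{-}2$ as soon as the first entry $v$ of $\pi$ is some value $\le n{-}1$, since then nothing precedes $v$ and the index $k = v$ is not counted. (By the obvious symmetry, ``place the largest'' needs $n{-}1$ steps only when the last entry of $\pi$ is $1$; this will not be needed.)

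Granting the refinement, the induction step is quick. Let $\pi \in S_n$ with $\pi \ne \rho_n$. If the first entry of $\pi$ is not $n$, the refinement finishes. If the first entry of $\pi$ is $n$, make one placement: send $n$ to its home position $n$. This is an extremal placement, hence never undone, and it leaves on positions $1,\dots,n{-}1$ a permutation $\tau \in S_{n-1}$ (the remaining numbers in the order they held in positions $2,\dots,n$ of $\pi$). Since $\tau = \rho_{n-1}$ would force $\pi = n,n{-}1,\dots,1 = \rho_n$, we have $\tau \ne \rho_{n-1}$, so by the inductive hypothesis $\tau$ is homed in at most $(n{-}1){-}2 = n{-}3$ further steps --- a total of at most $n{-}2$ for $\pi$, as desired.

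I expect the one genuinely fiddly point to be the bookkeeping behind the invariant in the refinement: one must check that $\mathcal A$ never re-disturbs a number already at home and that the block shifted when $j$ is placed really is made up only of numbers exceeding $j$, so that the invariant --- and therefore the exact step count --- is correct. Everything else is routine: reading off from the step count that equality $n{-}1$ happens exactly when $n$ heads the list, performing the single reduction $\pi \mapsto \tau$, and checking the two base cases.
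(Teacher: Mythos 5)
Your proposal is correct, but it takes a different route from the paper's. The paper's proof is a three-line induction with no algorithm analysis: if $\pi$ is not the reverse, then (for $n>2$) there is a non-inversion pair other than $(1,n)$, so at least one of the two extremal placements --- putting $1$ or $n$ home --- leaves a non-reverse permutation of the remaining $n{-}1$ numbers, which the induction hypothesis sorts in $n{-}3$ further steps. You instead dispose of the main case $\pi(1)\ne n$ without induction, by a sharpened analysis of the ``place the smallest not-at-home number'' algorithm of Theorem~\ref{thm:smallest}: your invariant (numbers $1,\dots,k$ parked, the rest in their original relative order) is valid --- when $j$ is placed it slides left past a block consisting only of larger numbers, which is translated rigidly --- and it yields the exact step count, namely the number of $k\le n{-}1$ preceded in $\pi$ by a larger value, hence at most $n{-}2$ unless $n$ heads the list; induction enters only in the residual case $\pi(1)=n$, through the same kind of extremal-placement reduction the paper uses. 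What the paper's argument buys is brevity and symmetry (it never commits to a particular algorithm, only to the existence of a good first extremal placement); what yours buys is strictly more information: an exact formula for the running time of the greedy file-sorting algorithm and the precise characterization of when it attains $n{-}1$ steps, from which the theorem falls out. Both inductions are anchored correctly ($n\le 2$ trivial, and $\tau=\rho_{n-1}$ would force $\pi=\rho_n$), so the proposal stands as a complete proof.
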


\begin{proof} By induction on $n$, the $n=1$ case being trivial.  If $\pi$ is not the
reverse permutation, there must be $i<j$ with $\pi^{-1}(i) < \pi^{-1}(j)$.  Moreover,
for $n>2$ it cannot be that the only such pair is $i=1$, $j=n$.  Hence either 1 or
$n$ can be placed still leaving a non-reverse permutation of the remaining numbers,
which can be sorted in $n{-}2$ steps by the induction hypothesis.
\end{proof}

Existence of a unique worst case (especially this one) for a sorting algorithm is
hardly surprising.  When we instead maximize the number of steps, something
startlingly different takes place.

\section{Slow Homing}\label{sec:slow}

How long can homing take?  It is not hard to verify that if one begins with the
permutation $2,3,\dots,n{-}1,n,1$ and always places the left-most not-at-home entry,
the result is $2^{n-1}-1$ steps before the identity permutation is reached (via
the familiar ``tower of Hanoi'' pattern).  Larson \cite{L} conjectured that
$2^{n-1}-1$ is the maximum.  Indeed, although many other, more complex, permutations
can also support $2^{n-1}-1$ steps, none permit more.

\begin{theorem}\label{thm:main} Homing always terminates in at most $2^{n-1}-1$ steps.
\end{theorem}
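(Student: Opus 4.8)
The plan is to attach to every permutation $\pi\in S_n$ an integer potential $\Phi(\pi)\in\{0,1,\dots,2^{n-1}-1\}$, presented through an explicit $(n{-}1)$-bit binary expansion, and to prove two things: (i) $\Phi$ vanishes exactly at the identity, and (ii) every single placement strictly decreases $\Phi$. Granting these, a homing run of length $m$ from $\pi$ gives a strictly decreasing chain of nonnegative integers $\Phi(\pi)>\Phi(\pi_1)>\dots>\Phi(\pi_m)=0$, so $m\le\Phi(\pi)\le 2^{n-1}-1$, which is the theorem. Note that termination is already in hand, so the only job of $\Phi$ is to bound the length; in fact the ``steps remaining in the worst case'' function $f$ already satisfies (i) and (ii) trivially, so all the content lies in producing a $\Phi$ (possibly larger than $f$) whose range is \emph{visibly} contained in $[0,2^{n-1})$ --- that is, in finding the right numeral system.

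The core of the work is the definition of $\Phi$ together with the verification of (ii). I expect $\Phi$ to be built hierarchically: the most significant bit records a coarse feature of $\pi$ --- something about how the extreme values $1$ and $n$, or the leading and trailing blocks, sit relative to their homes --- and the remaining bits recursively encode a smaller permutation obtained by stripping that feature away. The encoding should moreover be made symmetric under the reverse--complement involution $\pi\mapsto\pi^{\mathrm{rc}}$, $\pi^{\mathrm{rc}}(i)=n{+}1-\pi(n{+}1-i)$, which is precisely the symmetry of homing that interchanges upward and downward placements; this is what will let the two branches of the placement rule ($\pi(i)>i$ versus $\pi(i)<i$) be handled by one computation rather than two. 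Concretely I would first reverse--engineer the top bit from the slow example $2,3,\dots,n,1$ --- run it with left-most placement, watch which bit each of the $2^{n-1}-1$ steps flips along that Tower-of-Hanoi trajectory --- and then guess the recursion from the demand that this run be nothing but a plain binary count-down.

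With $\Phi$ fixed, the main obstacle is (ii). A placement slides a whole contiguous block of entries over by one position, which can flip several bits of $\Phi$ at once, so one must show the aggregate effect is always a net decrease --- in effect that $\Phi$ behaves under homing like a binary counter in which ``borrows'' cascade correctly from one level down to the next. The easy case is when the placed value touches only one level of the hierarchy. The delicate case, and where I expect the argument to be genuinely fiddly, is when the shifted block straddles several levels, so that a high bit is cleared while some lower bits may simultaneously be raised; here one needs the hierarchical definition arranged so that the cleared high bit strictly dominates the raised low ones. Checking this uniformly for both placement directions is exactly what the built-in reverse--complement symmetry is meant to tame.

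A tempting but, I think, less efficient alternative is induction on $n$ through $T(n)\le 2T(n-1)+1$: freeze value $n$ the instant it is placed (it can never again be dislodged, by the extremal remark of Section~\ref{sec:fast}), so the tail of the run is an $S_{n-1}$ instance of length $\le T(n-1)$, and one tries to show the head also has length $\le T(n-1)$. The catch is that value $n$ need never be placed --- it may drift to position $n$ passively --- so the head would have to be shown to \emph{simulate} an $S_{n-1}$ run by contracting value $n$'s cell out of the word; and that contraction is awkward because, while value $n$ is off-home, the homes of the surviving values no longer coincide with their positions in the contracted word. Making that bookkeeping work seems to cost about as much as building $\Phi$ directly, so I would commit to the binary potential from the start.
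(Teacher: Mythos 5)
There is a genuine gap: your argument never gets past the planning stage. The entire content of the theorem lies in exhibiting the potential $\Phi$ and proving that every placement strictly decreases it, and your proposal explicitly defers both (``I expect $\Phi$ to be built hierarchically\dots'', ``guess the recursion\dots''), while conceding that the multi-level ``borrow'' verification would be the fiddly part. Nothing in the proposal constructs $\Phi$ or proves property (ii), so no bound is actually established. Moreover, the paper's experience suggests your target is harder than your sketch assumes: the natural potential of this kind --- a ternary code $(a_2,\dots,a_{n-1})$ recording for each value whether it sits left of, right of, or at its home, with a ``double-ended binary'' weight --- does strictly increase under each eviction (the time-reversed placement), but if one extends it to a full-length code on all of $S_n$ it only yields the bound $2^n-1$, a factor of $2$ short of the theorem; and its monotonicity lemma (Lemma~\ref{lemma:weight-up}) needs the hypothesis that both $1$ and $n$ are away from home, which your single global potential would have to do without.

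It is also worth noting that the alternative you dismiss --- induction on $n$ --- is in fact half of the paper's proof. The paper runs the process backwards (``evicting'' from the identity) and splits the run in two: Lemma~\ref{lemma:stage1} shows by induction on $n$ that within the first $2^{n-2}$ displacements both $1$ and $n$ must have been displaced (each can be displaced only once), and after that the code weight, which lies in $[0,2^{n-2}-1]$, strictly increases at every step, giving $2^{n-2}+(2^{n-2}-1)=2^{n-1}-1$ in total. The ``catch'' you raise against the inductive route --- that the value $n$ may never be placed and only drift home passively --- evaporates in the eviction direction: if $1$ (or $n$) is never displaced it never moves at all and plays no role, so the remaining $n-1$ values would support more than $2^{n-2}-1$ displacements, contradicting the induction hypothesis, with no contraction or relabeling bookkeeping needed. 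So a correct completion of your program would either have to produce a genuinely new potential with range $[0,2^{n-1})$ valid on all of $S_n$ (which you have not done), or fall back on exactly the hybrid induction-plus-potential structure you set aside.
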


To prove Theorem~\ref{thm:main} we will require several lemmas and some
backward analysis.  The reverse of homing, which we will call {\em evicting},
entails choosing a number which {\em is} where it belongs and {\em dis}placing it,
that is, putting it somewhere else, again shifting the intervening values up or
down by one.  Our objective is then to show that beginning with the identity permutation
on $\{1,2,\dots,n\}$, at most $2^{n-1}-1$ displacements are possible.  This
is trivial for $n=1$ and we will proceed by induction on $n$.

\begin{lemma}\label{lemma:stage1} After $2^{n-2}$ displacements, both 1 and $n$
have been displaced and will never be displaced again.
\end{lemma}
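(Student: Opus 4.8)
The plan is to leverage the inductive framework already set up: we are bounding the length of a maximal sequence of displacements (evictions) starting from the identity of $S_n$, and we may assume the statement for $n-1$, namely that every such sequence in $S_{n-1}$ has at most $2^{n-2}-1$ steps. The governing idea is that so long as the value $n$ still sits in position $n$, the whole process is confined to the values $1,\dots,n-1$ in positions $1,\dots,n-1$, where it is an honest eviction sequence; by induction this can go on for only $2^{n-2}-1$ steps, after which $n$ must move.

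To make this precise I would first read off from the evicting rule exactly which steps can dislodge $n$ from position $n$. If the value displaced is some $b\le n-1$ and it is moved to a position $q\le n-1$, then every shifted position lies strictly between $b$ and $q$ and is therefore $\le n-1$: position $n$ is untouched, $n$ stays home, and the step, after deleting the fixed value $n$, is precisely a legal displacement of the identity-based process on $\{1,\dots,n-1\}$. The only steps that move $n$ out of position $n$ are (i) displacing $n$ itself, and (ii) displacing some $b\le n-1$ into position $n$, which slides $n$ down to position $n-1$. Consequently, if after $m$ displacements $n$ were still home, all $m$ of them would restrict to legal displacements of the $S_{n-1}$ identity process, forcing $m\le 2^{n-2}-1$ by the induction hypothesis; hence within the first $2^{n-2}$ displacements $n$ has left position $n$. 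The mirror-image argument (delete the value $1$ and re-index positions $2,\dots,n$) gives the same for $1$.

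For the ``never again'' half I would show that once $n$ has left position $n$ it can never come back. The relevant fact is that $n$ can move from a position $p$ to $p+1$ only when some value that is presently home, at a position $\ge p+1$, is evicted leftward past it; taking $p=n-1$ this would require $n$ itself to be home at position $n$, which it is not. So $n$ never revisits position $n$, hence is never again eligible to be displaced; symmetrically for $1$. Combining this with the first half yields the lemma.

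The step I expect to be the main obstacle is the bookkeeping in the reduction to $S_{n-1}$: one must verify that deleting the value $n$ (respectively $1$) and re-indexing positions converts the retained block of steps into a genuine eviction sequence from the $S_{n-1}$ identity --- in particular that $n$ parked at position $n$ never legitimizes a step that would be illegal in $S_{n-1}$ --- and one must settle the reading of ``displaced'', since a value can be shoved out of its home by a neighbour's eviction rather than chosen directly. What the argument really establishes, and what the remainder of the proof will use, is that after $2^{n-2}$ steps neither $1$ nor $n$ occupies its home and neither ever will again. Both structural ingredients --- ``nothing crosses an extremal value'' read backwards, and the confinement of the action to a sub-interval --- are elementary, so once the rule's case analysis is written out I expect the argument to be short.
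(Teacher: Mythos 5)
Your proof is correct and follows essentially the same route as the paper's: the no-return observation for the extremal values combined with the inductive bound $2^{n-2}-1$ applied to the process confined to the remaining $n-1$ positions. If anything, your phrasing in terms of ``$n$ still occupies its home'' (rather than ``$n$ has never been displaced'') deals more cleanly with the possibility that an extremal value is shoved out of its home without ever being chosen---a point the paper's wording glosses over---and the operative conclusion you isolate (after $2^{n-2}$ steps neither $1$ nor $n$ is home, nor ever again) is exactly what the two-stage argument requires.
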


\begin{proof}
Let us observe first that the numbers 1 and $n$ can each be displaced only
once, since neither can subsequently be shifted back to its proper end.
(Equivalently, in the forward direction, each can be placed only once.)

If after $2^{n-2}$ displacements one of these values (say, the number 1) has
never been displaced, then it remains where it began and played no role
whatever in the process.  Hence the remaining $n{-}1$ numbers allowed
more than $2^{(n-1)-1}-1$ displacements, contradicting the induction hypothesis.
\end{proof}

We now endeavor to show that at most $2^{n-2}-1$ displacements can take place in
the second stage, after 1 and $n$ have been displaced.  To do this we associate
with each intermediate state $\pi$ a {\em code} $\alpha(\pi)$, and with each code $\alpha$,
a weight $w = w(\alpha)$.

The code is a sequence $\alpha = (a_2,a_3,\dots,a_{n-1})$ of length $|\alpha| = n{-}2$
from the alphabet $\{+,-,0\}$.  Given a permutation $\pi$, recall from above
that $\pi(i)$ represents the value in the $i$th position from the left, and
therefore $\pi^{-1}(i)$ represents the position of the number $i$.
Define $\alpha(\pi)$ by putting $a_i = +$ if $\pi^{-1}(i) >i$, that is, if the
number $i$ is to the right of where it belongs.  Similarly, $a_i = -$
if $\pi^{-1}(i) <i$, and $a_i = 0$ if $\pi^{-1}(i) = i$.  Thus, a
number $i$ can be displaced if and only if $a_i = 0$.

Figure~\ref{fig:code} shows an example of a permutation and its code.

\begin{figure}[htbp]
\epsfxsize220pt
$$\epsfbox{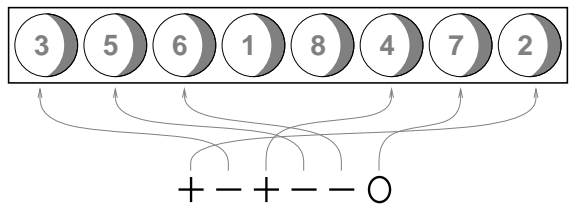}$$
\caption{A permutation and its code.}\label{fig:code}
\end{figure}

The weight $w(\alpha)$ is defined for codes of all lengths by recursion.
If $a_i = 0$ for each $i$, we put $w(\alpha) = 0$.

For each $i$ such that $a_i = -$, let $d_i=i{-}2$; for each $i$ with
$a_i=+$, let $d_i=n{-}1{-}i$.  Thus, $d_i$ represents the number of symbols
to the left of a $-$ or to the right of a $+$.

Let $i$ be the index maximizing $d_i$; if there are two such values (necessarily
one representing a $-$ and the other a $+$), let $i$ be the one for which $a_i=-$.
(We will see that this choice has no effect.)
Let $\alpha[i]$ the code of length $|\alpha|{-}1$ obtained by deleting the $i$th entry of $\alpha$.
Then $w(\alpha) = w(\alpha[i]) + 2^{d_i}$.

If the code consists only of 0's and $+$'s, then changing the $+$'s to 1's
gives the binary representation of $w(\alpha)$.  If instead there are no
$+$'s in the code, then changing every $-$ to a 1 gives the {\em reverse}
binary representation of $w(\alpha)$.  Thus the code is a sort of double-ended
binary representation of $w(\alpha)$.  Figure~\ref{fig:weight} shows the recursive derivation
of $w(\alpha)$ from a sample code $c$; the gray arrows point to the entry next to be stripped.

\begin{figure}[htbp]
\epsfxsize220pt
$$\epsfbox{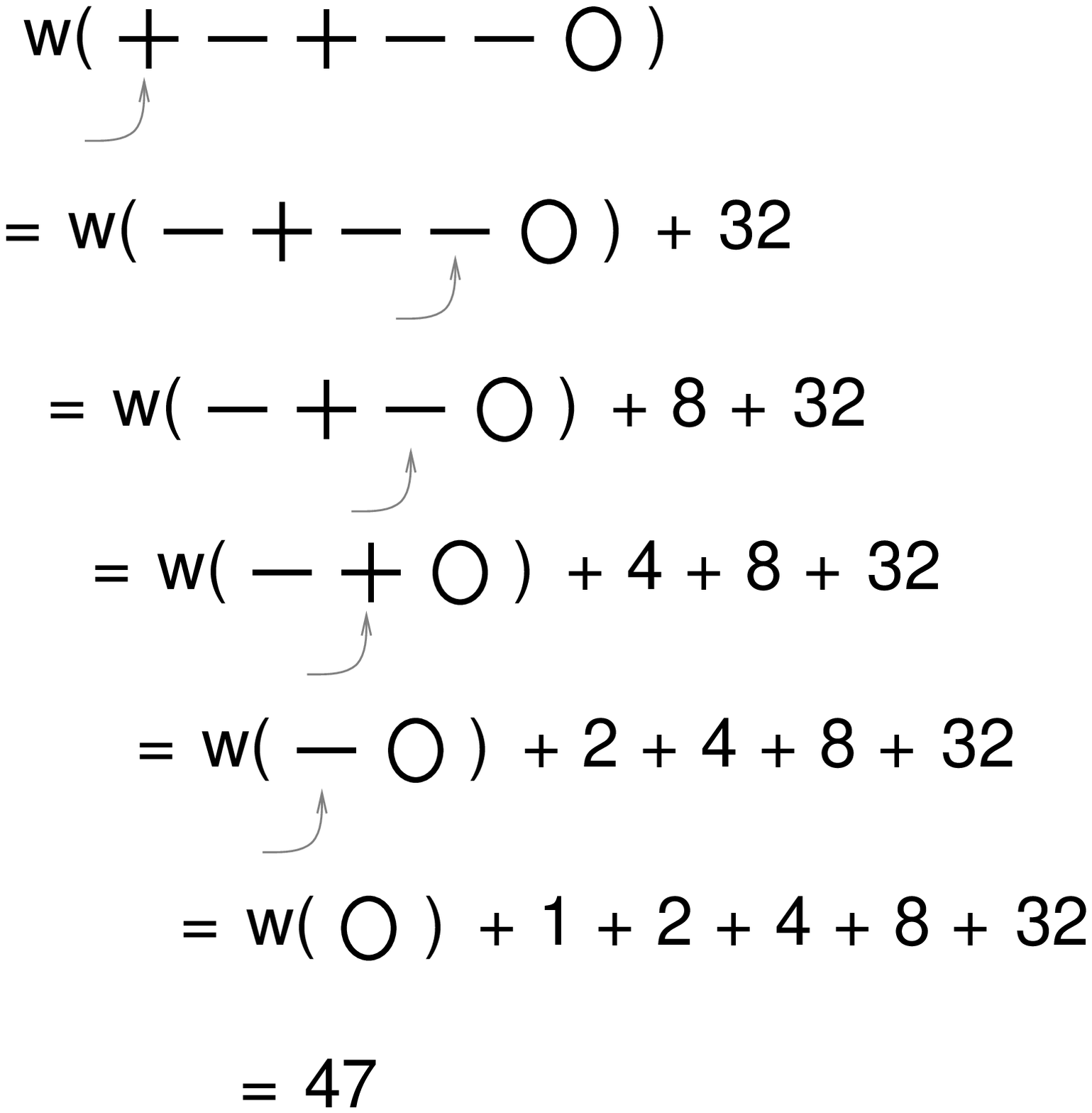}$$
\caption{Derivation of the weight of a code.}\label{fig:weight}
\end{figure}

We next make some elementary observations about codes and their weights.

\begin{lemma}\label{lemma:extremes}
The minimum of $w(\alpha)$ over codes $\alpha$ of length $k$ is 0, for the
all-0 code, and the maximum is $2^k-1$, for codes of the form $+^p -^q$.
\end{lemma}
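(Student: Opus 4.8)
The plan is to analyze the recursion defining $w(\alpha)$ directly, establishing both extremes by a single induction on the code length $k$, and then pinning down exactly which codes achieve the maximum.

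First I would dispose of the minimum. Since every summand $2^{d_i}$ added in the recursion is positive, and the recursion bottoms out at $0$ for the all-$0$ code, we have $w(\alpha) \ge 0$ always, with equality precisely when no stripping step ever occurs, i.e.\ when $\alpha$ is the all-$0$ code. This needs no induction.

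For the maximum, I would induct on $k = |\alpha|$, the base case $k=0$ being the empty code with weight $0 = 2^0 - 1$. For the inductive step, suppose $\alpha$ has length $k \ge 1$. If $\alpha$ is all-$0$ its weight is $0 \le 2^k-1$ and we are done, so assume some entry is nonzero. The stripped index $i$ maximizes $d_i$; since $d_i = i-2$ for a $-$ ranges over $\{0,1,\dots,k-1\}$ and $d_i = n-1-i$ for a $+$ ranges over $\{0,1,\dots,k-1\}$ as $i$ runs over $\{2,\dots,n-1\}$ (recall $k = n-2$), we have $d_i \le k-1$, so $w(\alpha) = w(\alpha[i]) + 2^{d_i} \le (2^{k-1}-1) + 2^{k-1} = 2^k - 1$ by the induction hypothesis applied to the length-$(k-1)$ code $\alpha[i]$. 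This proves $w(\alpha) \le 2^k-1$.

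It remains to characterize equality. Equality forces $d_i = k-1$ at the top step and $w(\alpha[i]) = 2^{k-1}-1$, so by induction $\alpha[i]$ must itself be of the form $+^p-^q$. Now $d_i = k-1$ means the stripped entry is either the leftmost entry and equals $-$, or the rightmost entry and equals $+$ (using the tie-breaking rule, if both ends qualify we strip the left $-$). Re-inserting a leading $-$ or a trailing $+$ into a code of the form $+^p-^q$ again yields a code of the form $+^{p'}-^{q'}$; conversely every such code with a nonzero entry has either a leading $-$ or a trailing $+$ whose $d$-value is $k-1$, and stripping it leaves a shorter code of the same form, so the weight of $+^p-^q$ is $2^k-1$ by the same computation. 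The only subtlety is to check that no code outside this family achieves the bound: any code with a $0$ somewhere, or with a $-$ appearing to the right of a $+$, will at some stripping step have every available $d_i$ strictly less than the current length minus one — in the first case because the extreme positions may be occupied by $0$'s, in the second because the $d$-values of the interior $\pm$ entries are too small — and this strict loss at one step cannot be recovered, giving a strictly smaller weight. Making this last gap-counting argument precise is the only real work; the rest is bookkeeping on the recursion.

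I expect the main obstacle to be exactly that final uniqueness direction: arguing cleanly that a $0$ in the code or an ``inversion'' $+\cdots-$ in the code forces a provable deficit. One clean way to handle it is to prove by induction the stronger statement that $w(\alpha) = 2^k-1$ if and only if $\alpha = +^p-^q$, carrying the ``if and only if'' through the recursion so that the bound and the equality case are established simultaneously; then the deficit argument is subsumed into the inductive step rather than needing a separate quantitative estimate.
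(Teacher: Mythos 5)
Your treatment of the minimum and your upper-bound induction are fine, and they are essentially the paper's (one-line) proof: in passing from length $k$ to length $k-1$ the weight changes by at most $2^{k-1}$, whence $w(\alpha)\le 2^{k-1}+2^{k-2}+\dots+1=2^k-1$. The trouble is in your analysis of when the maximal increment occurs: you have the orientation backwards. By definition $d_i$ counts the symbols to the \emph{left} of a $-$ and to the \emph{right} of a $+$, so $d_i=k-1$ forces the stripped entry to be a $-$ at the \emph{right} end or a $+$ at the \emph{left} end of the current code (exactly as the paper says), not ``a leftmost $-$ or a rightmost $+$.'' This swap makes the ensuing steps false as written: re-inserting a leading $-$ or a trailing $+$ into a code $+^p-^q$ gives $-\,+^p-^q$ or $+^p-^q\,+$, which is not of the form $+^{p'}-^{q'}$; the codes your rule actually builds are $-^q+^p$, and those do \emph{not} attain the maximum (e.g.\ $w(-+)=2$ while $w(+-)=3$). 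Likewise your proposed ``deficit'' pattern, a $-$ to the right of a $+$, is precisely the shape of the true maximizers $+^p-^q$, so it cannot be the obstruction; the obstruction is a $0$, or a $-$ lying to the \emph{left} of a $+$. (Also, the tie-break favors the $-$, which under the correct orientation sits at the right end, not ``the left $-$.'')

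Once the orientation is corrected, the remaining work is short and coincides with the paper's argument. For attainment: in $+^p-^q$ the leading $+$ (or trailing $-$) has $d=k-1$, and stripping it leaves a code of the same form, so $w(+^p-^q)=2^{k-1}+2^{k-2}+\dots+1=2^k-1$. For the converse (which the statement itself does not demand, though the paper relies on it later when asserting that permutations in $M_n$ have codes $+^k-^{n-2-k}$): attaining $2^k-1$ forces the increments to be exactly $2^{k-1},2^{k-2},\dots,1$, hence all $k$ entries are nonzero and each step strips the left end (a $+$) or the right end (a $-$) of the current code; rebuilding the code by prepending $+$'s and appending $-$'s always yields a code of the form $+^p-^q$. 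So the ``gap-counting'' you flagged as the main obstacle is not needed at all.
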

\begin{proof}
This follows from the fact that during the recursion, in reducing the length
of $\alpha$ from $k$ to $k{-}1$ the weight change is at most $2^{k-1}$, and
achieves that value only when a $-$ is deleted from the right or a $+$ from the left.
\end{proof}

\begin{lemma}\label{lemma:block}
Let $\alpha = \beta +^p \gamma -^q \delta$ where $|\beta|=|\delta|$, $\beta$ contains no $+$,
$\delta$ contains no $-$, and $\gamma$ neither begins with $+$ nor ends with $-$.
Then $w(\alpha) = w(\beta\gamma\delta) + 2^{p+|\gamma|+q+|\beta|} - 2^{|\gamma|+|\beta|}$.
\end{lemma}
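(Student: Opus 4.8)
The plan is to prove Lemma~\ref{lemma:block} by induction on $p+q$, peeling off one entry at a time according to the weight recursion. Throughout write $b=|\beta|=|\delta|$ and $g=|\gamma|$, so that $|\alpha|=2b+p+g+q$ and $|\beta\gamma\delta|=2b+g$. When $p+q=0$ the string $\alpha$ is literally $\beta\gamma\delta$ and the claimed identity reads $w(\alpha)=w(\beta\gamma\delta)+2^{g+b}-2^{g+b}$, which is trivially true; this is the base case.

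For the inductive step the crux is to determine which entry of $\alpha$ the recursion strips first, i.e.\ which one maximizes $d_i$. Suppose first $q\ge1$. The rightmost $-$ of the block $-^q$ has $b+p+g+q-1$ entries to its left, hence $d$-value $b+p+g+q-1$; and when $p\ge1$ the leftmost $+$ of the block $+^p$ has $b+p+g+q-1$ entries to its right, so it has the same $d$-value. Using the hypotheses --- $\beta$ has no $+$, $\delta$ has no $-$, $\gamma$ neither begins with $+$ nor ends with $-$, and $|\beta|=|\delta|$ --- one checks that no other entry of $\alpha$ attains a $d$-value this large, so the tie-breaking rule selects the designated $-$. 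Its deletion produces $\alpha[i]=\beta\,+^p\,\gamma\,-^{q-1}\,\delta$, which again satisfies all the hypotheses of the lemma, and $w(\alpha)=w(\alpha[i])+2^{b+p+g+q-1}$. If instead $q=0$ (hence $p\ge1$), the leftmost $+$ of $+^p$ has $b+p+g-1$ entries to its right, and the same kind of check shows this $d$-value $b+p+g-1$ is the strict maximum; deleting it gives $\alpha[i]=\beta\,+^{p-1}\,\gamma\,\delta$, still of the required form (now with an empty $-$-block), and $w(\alpha)=w(\alpha[i])+2^{b+p+g-1}$.

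To close, apply the induction hypothesis to $\alpha[i]$. In the case $q\ge1$ it gives $w(\alpha[i])=w(\beta\gamma\delta)+2^{p+g+(q-1)+b}-2^{g+b}$; since $b+p+g+q-1=p+g+(q-1)+b$, adding $2^{b+p+g+q-1}$ merely doubles the leading power to $2^{p+g+q+b}$, which is the claimed formula. In the case $q=0$ it gives $w(\alpha[i])=w(\beta\gamma\delta)+2^{(p-1)+g+b}-2^{g+b}$; since $b+p+g-1=(p-1)+g+b$, adding $2^{b+p+g-1}$ doubles the leading power to $2^{p+g+b}=2^{p+g+q+b}$, again as required.

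The step I expect to be fussiest is the ``which entry is stripped first'' claim. One must verify, one family at a time, that no $+$ inside $\gamma$ or $\delta$ and no $-$ inside $\beta$ or $\gamma$ can have a $d$-value as large as the designated entry's; this is where every structural hypothesis gets used, and in particular the balance $|\beta|=|\delta|$, which is exactly what makes the extreme $+$ of $+^p$ and the extreme $-$ of $-^q$ tie for the maximum rather than one strictly dominating the other (and which, incidentally, renders the tie-break choice immaterial here, as promised earlier). Everything after that is routine arithmetic with powers of two.
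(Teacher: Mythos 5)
Your proof is correct and follows essentially the same route as the paper's (much terser) argument: the paper simply observes that the entries of the blocks $+^p$ and $-^q$ are stripped before any other entry, contributing $2^{b+p+|\gamma|+q-1}+\dots+2^{b+|\gamma|}=2^{p+|\gamma|+q+b}-2^{|\gamma|+b}$, which is exactly what your induction on $p+q$ verifies, including the correct identification of the maximal $d$-value and the tie-break in favor of the $-$. Your filled-in check that no entry of $\beta$, $\gamma$, or $\delta$ can match the extremal block entries' $d$-value is accurate, so the argument is complete.
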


\begin{proof}
Immediate from the definition of $w(\alpha)$, since the indicated blocks of $+$'s and $-$'s will
be eliminated before any other entries.
\end{proof}

\begin{corollary}\label{cor:tiebreak} The definition of the weight of a code
does not depend on how ties are broken when $d_i=d_j$.
\end{corollary}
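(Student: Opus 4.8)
Looking at Corollary~\ref{cor:tiebreak}, I need to show that the weight $w(\alpha)$ is well-defined regardless of how a tie $d_i = d_j$ is broken during the recursion.

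\bigskip

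The plan is to reduce the general tie-breaking question to the single configuration identified in Lemma~\ref{lemma:block}. First I would observe that a tie $d_i = d_j$ with $i \neq j$ can only occur between an index $i$ with $a_i = -$ and an index $j$ with $a_j = +$: two distinct $-$'s have distinct values $i-2$, and two distinct $+$'s have distinct values $n-1-j$, so a tie forces one of each type. Moreover, if $d_i = d_j = m$ is the \emph{maximum} value of $d$ over the whole code, then every symbol strictly to the left of the $-$ at position $i$ must itself be a $-$ (otherwise a $+$ sitting there would have $d$-value exceeding $m$), and symmetrically every symbol strictly to the right of the $+$ at position $j$ must be a $+$. This means the code has exactly the shape $\alpha = \beta\, +^p \gamma\, -^q \delta$ of Lemma~\ref{lemma:block}: here $\delta = {-}^{\,i-2}$ is the block to the left (reading the ``$-$ side''), $\beta = {+}^{\,n-1-j}$ is the block to the right, and the tie is precisely between the rightmost $+$ of the $+$-block and the leftmost $-$ of the $-$-block.

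\bigskip

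Next I would invoke Lemma~\ref{lemma:block} directly. That lemma computes $w(\alpha)$ as $w(\beta\gamma\delta) + 2^{p+|\gamma|+q+|\beta|} - 2^{|\gamma|+|\beta|}$ by a route that strips off the entire $+$-block and the entire $-$-block \emph{before touching anything else}, and its proof is explicitly noted to be ``immediate from the definition'' regardless of stripping order \emph{within} those blocks. The only freedom in the recursion that is not already pinned down by the ``maximum $d_i$'' rule is exactly the choice at a top-level tie, and such a tie always presents the Lemma~\ref{lemma:block} picture. So whether we strip the tied $-$ first or the tied $+$ first, after stripping both the $+$-block and $-$-block we arrive at the same code $\beta\gamma\delta$ and have added the same total $2^{p+|\gamma|+q+|\beta|} - 2^{|\gamma|+|\beta|}$ (the two geometric sums $\sum 2^{d}$ over the deleted positions agree, since the set of $d$-values deleted is the same in either order). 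An easy induction on $|\alpha|$ then finishes: $w(\beta\gamma\delta)$ is well-defined by the inductive hypothesis, hence so is $w(\alpha)$.

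\bigskip

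The main obstacle, and the place where care is needed, is verifying that \emph{every} tie occurring anywhere in the recursion — not just the first one — is of the Lemma~\ref{lemma:block} form, and that resolving it one way versus the other leads to codes that, while possibly differing at that single step, re-converge after both extremal blocks are consumed. The cleanest way to handle this is not to track the two branches separately but to argue that any legal stripping order eventually removes the whole $+$-block-and-$-$-block pair contiguously (since each of those symbols keeps the maximal $d$-value until removed), so the partial weight accumulated is order-independent; the residual code after their removal is canonical. Once that structural claim is in hand, the corollary follows with no further computation.
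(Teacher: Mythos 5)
Your overall plan (reduce a top-level tie to the configuration of Lemma~\ref{lemma:block}, note that the weight contributed by stripping the two extremal blocks is order-independent, then induct on the length of the code) is the same as the paper's, but the structural claim you rest it on is false, and as a result one of the two cases is never actually handled. Maximality of the tied value $d_i=d_j$ does \emph{not} force every symbol left of the tied $-$ to be a $-$ and every symbol right of the tied $+$ to be a $+$: zeros may sit anywhere, and when the tied $+$ lies to the \emph{left} of the tied $-$ (which is exactly the case Lemma~\ref{lemma:block} is designed for) there may be arbitrary symbols, including other $+$'s and $-$'s, strictly between them --- that is precisely the role of $\gamma$. For instance the code $0\,{+}\,{-}\,0$ has a maximal tie between its single $+$ and single $-$ and is nothing like $-^a\cdots+^b$. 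What maximality really gives is only that no $+$ lies strictly left of the tied $+$ and no $-$ lies strictly right of the tied $-$, which together with $d_i=d_j$ (forcing $|\beta|=|\delta|$) is exactly the hypothesis of Lemma~\ref{lemma:block}.

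Moreover, your identification with the lemma is backwards: in $\beta\,{+}^p\gamma\,{-}^q\delta$ the $+$-block precedes the $-$-block and the tie is between the \emph{leftmost} $+$ of that block and the \emph{rightmost} $-$; you instead describe a code with the $-$'s on the left and the $+$'s on the right (``the rightmost $+$ \dots\ and the leftmost $-$''), a configuration to which Lemma~\ref{lemma:block} simply does not apply. That configuration --- all $+$'s to the right of all $-$'s --- is a genuinely separate case, and it needs (and admits) a different, easier argument, which your write-up never supplies: deleting the tied $+$ removes a symbol on the far side of the tied $-$ and hence leaves its $d$-value unchanged, and vice versa, so the two deletions commute outright and yield the same residual code and the same two contributions $2^m$. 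The paper treats these two cases separately; collapsing them into one incorrect picture leaves the proof with a real gap, even though the final sentence (induction on $|\alpha|$ once the tied blocks' contribution and the residual code are seen to be canonical) is the right way to finish.
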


\begin{proof} If in the code $\alpha$ $d_i = d_j$, where $a_i = +$ and $a_j = -$,
and $i<j$, then the situation is as in Lemma~\ref{lemma:block} and, irrespective of
the tiebreak mechanism, the entries of the blocks will be taken next and the
resulting weight is the same.  If $i>j$ (thus all $+$'s in $\alpha$ lie to the
right of all $-$'s), the removal of $a_i$ has no effect on $d_j$ and vice-versa,
so the two operations trivially commute.
\end{proof}

\begin{lemma}\label{lemma:zero}
For any codes $\gamma$ and $\delta$, where $\gamma$ has no $+$, $w(\gamma \delta 0)
\le w(\gamma\delta) + 2^{|\delta|}-1$.
\end{lemma}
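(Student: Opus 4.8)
The plan is to prove the inequality by strong induction on $N=|\gamma|+|\delta|$, for every $+$-free $\gamma$ and every $\delta$ with $|\gamma|+|\delta|=N$; the case $N=0$ is immediate. In the inductive step I would first dispatch the case where $\delta$ itself contains no $+$: then $\gamma\delta$ and $\gamma\delta 0$ are both $+$-free codes, and for such a code the recursion simply strips the rightmost $-$ repeatedly, which never alters the $d$-value of a $-$ to its left, so $w=\sum_{a_i=-}2^{i-2}$; appending a trailing $0$ changes none of these terms (equivalently, it is a leading zero of the reverse binary representation), so $w(\gamma\delta 0)=w(\gamma\delta)$ and we are done with room to spare.

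So suppose $\delta$ has a $+$. Absorbing into $\gamma$ the maximal $+$-free prefix of $\gamma\delta$ leaves $w(\gamma\delta)$ unchanged and only decreases the exponent $|\delta|$, so after renaming we may assume $\gamma$ is $+$-free and $\delta=+\tau$ begins with a $+$. Now I examine which symbol the recursion strips first from $\gamma\delta 0$. If it is a $-$, lying in $\gamma$ or in $\tau$, then deleting it lowers $N$ and leaves a code of the same shape ($+$-free prefix, then the rest, then a trailing $0$), so the induction hypothesis applies; the point that makes the arithmetic close is that this same $-$ is also the first symbol stripped from $\gamma\delta$, because deleting the trailing $0$ lowers the $d$-value of every $+$ but of no $-$, so the $-$ retains its maximality — and this lets the $2^{d_i}$ contributions telescope cleanly (exactly when the $-$ lay in $\gamma$, with slack to spare when it lay in $\tau$). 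If instead a $+$ is stripped first, it must be the leading $+$ of $\delta$, whose $d$-value in $\gamma\delta 0$ is exactly $|\delta|$; deleting it leaves $\gamma\tau 0$, to which I apply the induction hypothesis, and it then remains to check $w(\gamma\delta)=w(\gamma\tau)+2^{|\delta|-1}$. In $\gamma\delta$ that leading $+$ has $d$-value only $|\delta|-1$, where it ties rather than strictly beats the largest $-$; but Corollary~\ref{cor:tiebreak} lets us resolve this tie in favour of the $+$ without changing the weight, so it may be stripped first from $\gamma\delta$ as well, and the identity follows. Assembling the pieces gives $w(\gamma\delta 0)\le w(\gamma\tau)+2^{|\delta|-1}-1+2^{|\delta|}=w(\gamma\delta)+2^{|\delta|}-1$.

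The main obstacle is precisely this mismatch between the stripping order of $\gamma\delta 0$ and that of $\gamma\delta$: the trailing $0$ inflates every $+$'s $d$-value by one, so the two recursions need not peel in the same order, and a tie in $\gamma\delta$ can become a strict win in $\gamma\delta 0$. Corollary~\ref{cor:tiebreak} (and, in spirit, Lemma~\ref{lemma:block}) is exactly the tool that rescues the delicate $+$-first case; beyond that, the work is a careful case split — $-$-first versus $+$-first, and, in the former, whether the $-$ sits in $\gamma$ or past the leading $+$ — together with the routine verification that in each case the telescoped $2^{d_i}$ terms land inside the claimed slack $2^{|\delta|}-1$.
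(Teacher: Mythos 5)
Your proof is correct, but it takes a genuinely different route from the paper's. You argue by induction on $|\gamma|+|\delta|$, reducing via the harmless WLOG that $\delta$ begins with a $+$ and then splitting on the first symbol stripped from $\gamma\delta 0$: if it is a $-$, its $d$-value is unchanged by the trailing $0$ while every $+$'s $d$-value only drops, so the same $-$ leads off the derivation of $w(\gamma\delta)$ and the two $2^{d}$ terms cancel, leaving the induction hypothesis to finish (with slack when the $-$ sits inside $\delta$); if it is a $+$, it is the leading $+$ of $\delta$ contributing $2^{|\delta|}$, and Corollary~\ref{cor:tiebreak} resolves the possible tie in $\gamma\delta$ so that $w(\gamma\delta)=w(\gamma\tau)+2^{|\delta|-1}$, after which the arithmetic closes. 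The paper instead couples the two derivations globally: using Corollary~\ref{cor:tiebreak} it chooses tiebreaks (favoring $-$ for $\gamma\delta$, $+$ for $\gamma\delta 0$) so that both codes strip symbols in the same order until all $+$'s lie to the right of all $-$'s, after which the order is immaterial; then each stripped $-$ contributes identically to both weights while each stripped $+$ contributes at most twice as much to $w(\gamma\delta 0)$, and summing the excess over the at most $|\delta|$ many $+$'s (extremal case $\delta=+^{|\delta|}$) gives the bound $2^{|\delta|-1}+\cdots+1=2^{|\delta|}-1$. Your induction localizes the delicate tiebreak issue to a single step and is easier to verify line by line, at the cost of a longer case analysis; the paper's coupling is shorter and makes the intuition transparent (the trailing $0$ doubles each $+$'s power of $2$) and exhibits the extremal $\delta$, but leaves the same-order claim and the final summation more informal.
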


\begin{proof}
Clearly the presence of an extra 0 at the end increments $d_i$ by 1 whenever
$a_i=+$, so if a $+$ is stripped from both $\gamma\delta 0$ and from $\gamma\delta$
the weight change is doubled for the former.  In the extreme case, if $\delta = +^{|\delta|}$,
the difference $w(\gamma \delta 0) - w(\gamma\delta)$ is therefore exactly
$\sum_{j=0}^{|\delta|-1} = 2^{|\delta|}-1$.

When a $-$ is stripped, the weight change is the same, so it would appear that
$w(\gamma \delta 0) - w(\gamma\delta)$ would then be smaller.  The difficulty is that
the incremented weights may cause symbols to be stripped in a different order in the two codes.

To fix that problem we employ Corollary~\ref{cor:tiebreak}.  In deriving
$w(\gamma \delta)$ ties are broken in favor of $-$ (as in the definition of $w$);
when deriving $w(\gamma \delta 0)$, in favor of $+$.  This will result in symbols being
stripped in the same order from the two codes, up to the point where all $+$'s
lie to the right of all $-$'s.  After that, $d_i$ for a given symbol is unaffected by
stripping symbols of the opposite sign, so the order becomes immaterial.
\end{proof}

\begin{lemma}\label{lemma:change}
Let $\alpha$ be any code, and $\beta = (b_2,\dots,b_{n-1})$ the result of
changing some $a_i=0$ to $b_i=+$ or $b_i=-$.  Then $w(\beta)>w(\alpha)$.
\end{lemma}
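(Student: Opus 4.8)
The plan is to induct on the common length $|\alpha|=|\beta|=n-2$. The base case $|\alpha|=1$ is immediate: then $\alpha=(0)$ with $w(\alpha)=0$, while $\beta$ is a single nonzero symbol, of weight $2^0=1$. Invoking the left–right symmetry of the weight function — reversing a code while swapping $+\leftrightarrow-$ leaves the weight unchanged, a routine induction that uses Corollary~\ref{cor:tiebreak} to align tie-broken choices — I may assume throughout that the altered entry goes $a_i=0\mapsto b_i=+$. The one structural fact I will lean on is that $\alpha$ and $\beta$ assign the same $d$-value to every entry they have in common, since such an entry occupies the same position in two codes of the same length.

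For the inductive step I strip $\beta$: let $e$ be an entry of $\beta$ of maximum $d$-value (by Corollary~\ref{cor:tiebreak} the tie-break is irrelevant). \textbf{Case B:} some entry of maximum $d$ in $\beta$ is shared with $\alpha$; take $e$, at position $j$, to be such an entry. Then $e$ also attains the maximum $d$ in $\alpha$ (all nonzero entries of $\alpha$ are shared with $\beta$ and $e$ dominates them there, hence here), so $w(\alpha)=w(\alpha[j])+2^{d(e)}$ and $w(\beta)=w(\beta[j])+2^{d(e)}$. But $\alpha[j]$ and $\beta[j]$ are again two codes of equal length that agree except at one position, where $\alpha[j]$ has a $0$ and $\beta[j]$ has a $+$; the inductive hypothesis gives $w(\beta[j])>w(\alpha[j])$, and we win.

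\textbf{Case A:} otherwise the new $+$ at position $i$ is the unique entry of $\beta$ of maximum $d$, so $w(\beta)=w(\beta[i])+2^{n-1-i}$, and $n-1-i$ strictly exceeds the $d$-value of every nonzero entry of $\alpha$; in particular $\alpha$ has no $+$ left of position $i$ and every $-$ of $\alpha$ sits at position $\le n-i$. Write $\alpha=\alpha_L\,0\,\alpha_R$, split at position $i$, so $\alpha_L$ has no $+$ and $\beta[i]=\alpha_L\alpha_R$ with $|\alpha_R|=n-1-i$. It then suffices to show $w(\alpha_L\,0\,\alpha_R)\le w(\alpha_L\alpha_R)+2^{|\alpha_R|}-1$, for then $w(\alpha)<w(\alpha_L\alpha_R)+2^{|\alpha_R|}=w(\beta)$. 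To prove this last inequality I would repeat the parallel-stripping argument of Lemma~\ref{lemma:zero}, now breaking ties toward $+$ in $\alpha_L\,0\,\alpha_R$ and toward $-$ in $\alpha_L\alpha_R$. Inserting the extra $0$ raises by exactly $1$ the $d$-value of each $-$ inside $\alpha_R$ and changes nothing else (here is where ``$\alpha_L$ has no $+$'' is used); with the aligned tie-breaking the two recursions remove symbols in the same order until all $+$'s lie to the right of all $-$'s, after which the order is immaterial. Hence $w(\alpha_L\,0\,\alpha_R)-w(\alpha_L\alpha_R)$ is the sum of $2^{d}$ over the $-$'s of $\alpha_R$, where $d$ is that symbol's $d$-value in $\alpha_L\alpha_R$ at the moment it is stripped. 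Distinct $-$'s get stripped with distinct such values (of two $-$'s, the left one both starts lower and loses at least one more unit before being removed), and each is at most $|\alpha_R|-1$ because every $-$ of $\alpha$ lies at position $\le n-i$; so the sum is a set of distinct powers of two each bounded by $2^{|\alpha_R|-1}$, hence at most $2^{|\alpha_R|}-1$.

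The routine parts are the symmetry reduction and Case~B. The real work — and the step I expect to be the main obstacle — is Case~A, specifically the ``$0$-in-the-middle'' inequality $w(\alpha_L\,0\,\alpha_R)\le w(\alpha_L\alpha_R)+2^{|\alpha_R|}-1$. It looks like Lemma~\ref{lemma:zero}, but that lemma only appends a $0$ at the far right, so one must re-run its parallel-strip-with-tie-breaks argument in this more general position; and the subtle point is that one must exploit the Case~A hypotheses (no $+$ to the left of the inserted $0$, every $-$ at position $\le n-i$) to keep the affected $-$'s $d$-values below $|\alpha_R|$. Without that hypothesis the inequality is false, so this bookkeeping cannot be skipped.
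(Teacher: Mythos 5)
Your argument is correct, and its skeleton matches the paper's: run the two weight recursions in parallel until the altered symbol is the one to be stripped (your Case~B, handled by induction with Corollary~\ref{cor:tiebreak}, is the paper's ``the derivations are the same until $b_i$ is stripped''), and then bound the cost of having a $0$ rather than the new sign at that position by something strictly less than $2^{d_i}$. Where you diverge is in how that last bound is obtained. The paper stays with the $b_i=-$ case, writes the surviving code as $\gamma\delta b_i\eps$ with $\gamma$ free of $+$'s and $\eps$ free of $-$'s, splits off the suffix additively via $w(\gamma\delta\eps)=w(\gamma\delta 0^{|\eps|})+w(\eps)$, and then invokes Lemma~\ref{lemma:zero} (append a $0$ at the right end) as a black box, with $|\eps|\le|\gamma|$ supplying strictness. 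You instead prove a ``insert a $0$ in the middle'' variant of Lemma~\ref{lemma:zero} directly, by redoing its parallel-stripping/tie-alignment argument and using the Case~A hypotheses (no $+$ left of position $i$, every $-$ at position $\le n-i$) to keep the boosted $-$'s contributing distinct powers of two below $2^{|\alpha_R|}$. The two routes are of comparable length and rigor (your middle-insertion step is argued at the same level of detail as the paper's own proof of Lemma~\ref{lemma:zero}); the paper's version buys reuse of an already-stated lemma plus a clean suffix decomposition, while yours buys a self-contained statement that makes explicit exactly which structural hypotheses are needed and why the inequality fails without them.
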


\begin{proof}
Suppose $b_i=-$; the other case is symmetric (and uses the reflected form of Lemma~\ref{lemma:zero}).

The derivations of $w(\beta)$ and $w(\alpha)$ are the same until
$b_i$ is stripped. Let $\beta'$ and $\alpha'$ be the corresponding codes at that point, right before $b_i$
is stripped.
We can write $\beta'= \gamma \delta b_i \eps$, where $\gamma$ contains no $+$,
$\eps$ no $-$, and $|\eps|\le|\gamma|$.

We then have
$$
w(\beta') = w(\gamma \delta \eps) + 2^{|\gamma|+|\delta|} =
w(\gamma \delta 0^{|\eps|}) + w(\eps) + 2^{|\gamma|+|\delta|}
\ge w(\gamma \delta 0^{|\eps|+1}) - (2^{|\delta|+|\eps|}-1) + w(\eps) + 2^{|\gamma|+|\delta|}
$$
(by Lemma~\ref{lemma:zero})
$$
> w(\gamma \delta 0^{|\eps|+1}) + w(\eps) = w(\gamma \delta 0 \eps) = w(\alpha').
$$
\end{proof}

\begin{lemma}\label{lemma:weight-up} Let $\pi$ be any permutation of $\{1,\dots,n\}$ in which
$\pi(1) \not= 1$ and $\pi(n) \not= n$, and let $\pi'$ be the result of
applying some displacement to $\pi$.  Let $\alpha = \alpha(\pi)$ and $\alpha' = \alpha(\pi')$;
then $w(\alpha') > w(\alpha)$.
\end{lemma}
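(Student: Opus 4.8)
The plan is to work out precisely how the code reacts to one displacement and then to reduce the inequality $w(\alpha')>w(\alpha)$ to the machinery already in place --- chiefly Lemmas~\ref{lemma:zero},~\ref{lemma:block},~\ref{lemma:change} and Corollary~\ref{cor:tiebreak}. I would first invoke the left--right symmetry: the reflection sending $\pi$ to the permutation $k\mapsto n{+}1{-}\pi(n{+}1{-}k)$ carries displacements to displacements, reverses the code, and interchanges $+$ with $-$; since a $-$ at index $i$ has $d_i=i{-}2$ (the number of symbols on its left) while a $+$ has $d_i=n{-}1{-}i$ (the number on its right), this reflection leaves $w$ unchanged, and it is consistent with Corollary~\ref{cor:tiebreak} and with the reflected form of Lemma~\ref{lemma:zero} already used in the proof of Lemma~\ref{lemma:change}. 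Hence it suffices to treat a rightward displacement: $\pi(i)=i$ (so $a_i=0$, and necessarily $2\le i\le n{-}1$, the displaced number being neither $1$ nor $n$), and $i$ is moved to some position $j>i$ while the values occupying positions $i{+}1,\dots,j$ each slide one step to the left.

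Next I would read off $\alpha'=\alpha(\pi')$. A value sitting in a position $\le i$ or $>j$ does not move, and a value that does slide changes its code entry exactly when moving one step left carries it across its home. The outcome is: $a_i$ becomes $+$; for every fixed point $v$ of $\pi$ with $i<v\le j$ the entry $a_v$ becomes $-$ (this is where the hypotheses are used: $\pi(n)\ne n$, and on the reflected side $\pi(1)\ne1$, guarantee $2\le v\le n{-}1$, so $a_v$ really is a code entry); and for every $v$ with $i<v<j$ and $\pi(v{+}1)=v$ the entry $a_v$ drops from $+$ to $0$, these last indices falling into maximal runs of consecutive integers, each run carrying only $+$'s in $\alpha$. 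In particular every changed index lies in the window $[i,j]$ with $i$ strictly to the left of all the others, and --- a point I would stress --- none of these changes alters $|\alpha|$, so the only $d$-values involved are $d_i=n{-}1{-}i$ for the newly installed $+$ and $d_v=n{-}1{-}v<d_i$ for each index $v$ that drops from $+$ to $0$.

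For the weight itself, installing the $+$ at $i$ strictly raises $w$ by Lemma~\ref{lemma:change}, and so does each promotion of a $0$ to a $-$; carrying out all of these first already puts us strictly above $w(\alpha)$, so it remains only to show that then dropping the $+$'s at the indices $v$ (with $i<v<j$ and $\pi(v{+}1)=v$) back down to $0$ cannot erase that surplus. This is the crux. The moral reason it works is that every such drop happens strictly to the right of $i$ and is therefore lighter than the $+$ freshly placed at the leftmost affected index: the exponents $n{-}1{-}v$ are distinct and all below $n{-}1{-}i$, so the drops cost, altogether, less than the single bit $2^{\,n-1-i}$ --- a high bit outweighs the totality of the strictly lower bits. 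I would turn this into a proof by induction, on $j-i$ (equivalently, on the number of maximal $+\!\to\!0$ runs), running the weight recursions for $\alpha$ and for $\alpha'$ alongside one another under a single tie-break convention (Corollary~\ref{cor:tiebreak}) so that the two derivations coincide until they first reach the window $[i,j]$, and then comparing run by run with the aid of Lemmas~\ref{lemma:block} and~\ref{lemma:change}.

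The step I expect to be the real obstacle is precisely this last comparison. Because the $d$-values do shrink once the recursion begins removing symbols, the clean estimate that a high bit dominates the lower bits cannot simply be quoted for the original codes: it must be shepherded through the recursion, and one has to verify that inserting the new $+$ at index $i$ (and the new $-$'s) never drives the two derivations apart outside $[i,j]$ in a way that reverses the inequality --- that the entire discrepancy between the two derivations stays confined to the window until the runs there have been consumed. Keeping the recursions synchronized, localizing the discrepancy, and closing the induction on it is where the genuine work lies, and Lemmas~\ref{lemma:block},~\ref{lemma:change} and Corollary~\ref{cor:tiebreak} are precisely the tools built for that.
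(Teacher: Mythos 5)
Your setup is sound and essentially the paper's: the symmetry reduction to one direction of displacement, the exact description of how one displacement alters the code (a new $+$ at the displaced index $i$, some $0\to{-}$ promotions, and $+\to 0$ drops occurring only at indices strictly to the right of $i$, with the hypotheses $\pi(1)\ne1$, $\pi(n)\ne n$ keeping everything inside the code), and the use of Lemma~\ref{lemma:change} to set aside the weight-increasing changes are all correct. You have also correctly located the crux: the drops must be paid for by the new symbol at $i$, and the naive ``one high bit beats all strictly lower bits'' estimate cannot simply be quoted, because contributions are the powers $2^{d}$ realized \emph{during} the recursion, not $2^{n-1-v}$. But that crux is exactly what the proposal does not prove; it ends by declaring it ``where the genuine work lies.'' Since this comparison \emph{is} the content of the lemma, what you have is a plan with a genuine gap, not a proof.

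Moreover, the route you sketch for closing the gap --- synchronize the two derivations by a common tie-break so that they ``coincide until they first reach the window $[i,j]$,'' and keep the discrepancy confined to that window --- is doubtful as stated. Stripping the new $+$ at $i$ early (or the new $-$'s, or the absence in $\alpha'$ of the $+$'s that dropped to $0$) changes the $d$-values of symbols arbitrarily far outside $[i,j]$: every $-$ to the right of the stripped symbol and every $+$ to its left has its count reduced, so the two derivations can disagree well outside the window, and no confinement argument is available. The paper does not localize; it bounds the aggregate discrepancy directly. Letting $2^{k}$ be the actual contribution of the new symbol at $i$ to $w(\alpha')$, each opposite-sign entry lying between $i$ and the flipped entries and stripped after the new symbol contributes to $w(\alpha')$ exactly half of what it contributes to $w(\alpha)$; since those contributions are distinct powers of $2$ below $2^{k}$, the total loss they cause is at most $2^{k-1}+\dots+2^{k-\ell}=2^{k-\ell}(2^{\ell}-1)$. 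The entries that flipped to $0$ contributed to $w(\alpha)$ distinct powers of $2$ all less than $2^{k-\ell}$, hence at most $2^{k-\ell}-1$ in total. Combining, $w(\alpha')\ge w(\alpha)+2^{k}-(2^{k-\ell}-1)-2^{k-\ell}(2^{\ell}-1)=w(\alpha)+1>w(\alpha)$. Some quantitative bookkeeping of this kind (or a correct substitute) is indispensable; without it your argument does not establish $w(\alpha')>w(\alpha)$.
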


\begin{proof} A displacement chases a value $i$ away from home, thus causing the
0 in position $i$ of the code $\alpha$ to become a $+$ or a $-$. Assume the latter
(the alternative argument is symmetric).  Since the number $i$ is being moved to
the left, other numbers will move right one position or stay where they are;
thus, the other entries of $\alpha$ can change only from $-$ to 0 or from $0$ to $+$.
We care only about the former possibility, since by Lemma~\ref{lemma:change},
changing a $0$ to a $+$ can only increase $w(\alpha)$.

However, any change of a $-$ to a $0$ in $\alpha$ must have taken place to the left
of the entry $a_i$, because a number bigger than $i$ but to its left in $\pi$
cannot get to its home (to the right of position $i$) when $i$ is displaced.
Again by Lemma~\ref{lemma:change}, we can assume that all the $-$'s to the left of $a_i$
change to $0$. Let $j$ be the position of the rightmost $-$ to the left of $a_i$ in $\alpha$
(if there is no such $-$, let $j=1$). Let $2^k$ be the contribution of the $-$ in position $i$ in the computation of $w(\alpha')$.

If there are any $+$ entries between $a_j$ and $a_i$ that are stripped after the $-$ in position $i$ in $\alpha'$, then
their contribution to $w(\alpha')$ is less (by a factor of 2) than their contribution to $w(\alpha)$.
Let $\ell$ be the number of such $+$'s. The total contribution of these $+$'s to $w(\alpha')$ is at most
$2^{k-1}+2^{k-2}+\dots+2^{k-\ell}=2^{k-\ell}(2^\ell-1)$. Thus, the difference between their contribution to
$w(\alpha)$ and their contribution to $w(\alpha')$ is at most $2^{k-\ell}(2^\ell-1)$ as well.
On the other hand, the total contribution to $w(\alpha)$ of the $-$ entries to the left of $a_i$ in $\alpha$ is at most $2^{k-\ell}-1$,
since each adds a different power of 2 less than $2^{k-\ell}$. We conclude that
$$w(\alpha')\ge w(\alpha)+2^{k}-(2^{k-\ell}-1)-2^{k-\ell}(2^\ell-1)>w(\alpha).$$
\end{proof}

Theorem~\ref{thm:main} is an easy consequence of Lemmas~\ref{lemma:stage1}~and~\ref{lemma:weight-up}.
In fact nothing prevents us from associating to each $\pi \in S_n$ a code of
full length $n$, and applying the above argument to conclude directly that
there can be no more than $2^n-1$ displacements.  However, this falls short of
the desired result by a factor of 2 (as does an argument based on Elkies' finiteness
proof); hence the 2-stage argument above.

\section{Counting Bad Cases}\label{sec:count}

The proof of Theorem~\ref{thm:main} tells us somewhat more about the
worst-case structure of eviction, that is, about the digraph on $S_n$ which
boasts an arc from $\pi$ to $\pi'$ when $\pi'$ is among the longest-lived
permutations that can be reached from $\pi$ by a single displacement.
We are particularly interested in the set $M_n$ of permutations at maximum distance
$2^{n-1}-1$ from the source (the identity permutation), since these are
the worst-case starting points for homing.  The proof shows that each permutation
in $M_n$ must have a code of the form $+^k -^{n-2-k}$, but the converse does not hold
in general.

Let the {\em height} $\h(\pi)$ of a permutation $\pi$ be the distance to $\pi$
from the source in the above digraph (equivalently,
the maximum length of a sequence of placements from $\pi$ to the identity).
In the rest of the paper, let $\tau_n$ denote the permutation $n,2,3,\dots,n{-}1,1$.

\begin{lemma}\label{lem:n_1}
$\h(\tau_n)=2^{n-2}$.
\end{lemma}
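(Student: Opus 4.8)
The plan is to peel off a single homing step from $\tau_n=n,2,3,\dots,n-1,1$ and reduce to the rotation $\rho_m:=2,3,\dots,m,1$, whose height is already pinned down by Section~\ref{sec:slow}. The first observation is that in $\tau_n$ the only entries not at home are $1$ and $n$, so exactly two placements are available: placing $n$ produces $\sigma_n:=2,3,\dots,n-1,1,n$, and placing $1$ produces $\sigma_1:=1,n,2,3,\dots,n-1$. Hence $\h(\tau_n)=1+\max\{\h(\sigma_1),\h(\sigma_n)\}$, so the whole lemma reduces to showing this maximum equals $2^{n-2}-1$.

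Two routine facts handle that. First, a fixed point at an end is inert: if $\sigma\in S_n$ fixes $n$ with restriction $\sigma'\in S_{n-1}$ to $\{1,\dots,n-1\}$, then during any homing run on $\sigma$ the values $1,\dots,n-1$ never leave positions $1,\dots,n-1$ (placing some $i\le n-1$ moves it between two positions both $\le n-1$, and the induced shift touches only positions strictly between them), so the entry $n$ is never disturbed and $\h(\sigma)=\h(\sigma')$; symmetrically for a fixed point at position $1$. Second, the reverse-complement map $\pi\mapsto\pi^r$ with $\pi^r(i)=n{+}1{-}\pi(n{+}1{-}i)$ is an automorphism of the homing digraph, since it simply renames ``place $i$ upward'' as ``place $n{+}1{-}i$ downward''; hence $\h(\pi)=\h(\pi^r)$. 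A one-line computation gives $\sigma_1^{\,r}=\sigma_n$, so $\h(\sigma_1)=\h(\sigma_n)$ and it suffices to evaluate $\h(\sigma_n)$.

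Now $\sigma_n=2,3,\dots,n-1,1,n$ is precisely the rotation $\rho_{n-1}$ on $\{1,\dots,n-1\}$ with $n$ adjoined as a fixed point, so the first fact gives $\h(\sigma_n)=\h(\rho_{n-1})$. Finally $\h(\rho_{n-1})=2^{n-2}-1$: Theorem~\ref{thm:main} supplies the upper bound $\h(\rho_{n-1})\le 2^{(n-1)-1}-1=2^{n-2}-1$, while the ``tower of Hanoi'' run recalled at the start of Section~\ref{sec:slow} (start from the rotation, always place the left-most out-of-place entry) is a homing sequence of length exactly $2^{n-2}-1$ and gives the matching lower bound. Assembling the pieces, $\h(\tau_n)=1+(2^{n-2}-1)=2^{n-2}$.

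I do not anticipate a real obstacle; the points needing care are the verification that the tail fixed point truly plays no role and the easy check that $\tau_n$ has exactly the two children described. One can sidestep the reverse-complement symmetry by instead noting directly that $\sigma_1=1,n,2,\dots,n-1$ is a reversed rotation on $\{2,\dots,n\}$ with $1$ a fixed point and running the Hanoi argument on it as well; either route shows both children of $\tau_n$ have height $2^{n-2}-1$.
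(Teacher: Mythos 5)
Your proof is correct and follows essentially the same route as the paper's: peel off the forced first placement of $1$ or $n$, reduce by symmetry to placing $n$, identify the result with the rotation $2,3,\dots,n{-}1,1$ of length $n{-}1$, and combine Theorem~\ref{thm:main} with the tower-of-Hanoi run to get $\h=2^{n-2}-1$ for it. You simply make explicit two facts the paper leaves implicit (the inertness of an end fixed point and the reverse-complement symmetry), which is fine.
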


\begin{proof}
The only entries that can be placed in the first step are $n$ and $1$.
By symmetry, we can assume that $n$ is placed first. The steps after that are
equivalent to homing the permutation $2,3,\dots,n{-}1,1$ of length $n{-}1$.
By Theorem~\ref{thm:main} and the observation above it, we know that
$\h(2,3,\dots,n{-}1,1)=2^{n-2}-1$.
\end{proof}

\begin{lemma}\label{lem:carry}
For any permutation with code $\alpha=+^i 0^k -^j$, there is a sequence of
$2^{k-1}$ displacements that ends in a permutation with code $+^i 0^{k-1} -^{j+1}$.
Moreover, all the displacements in the sequence are unique, except for possibly the last one.
\end{lemma}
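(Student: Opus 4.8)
The plan is to write the $2^{k-1}$ displacements down explicitly — they form a tower‑of‑Hanoi dance — verify them by induction, and then read off the uniqueness claim from the weight machinery of Lemmas~\ref{lemma:extremes}--\ref{lemma:weight-up}. Here is the set‑up. In a permutation $\pi$ with code $\alpha=+^i0^k-^j$ the values $i{+}2,\dots,i{+}k{+}1$ occupy their home positions $i{+}2,\dots,i{+}k{+}1$ (the indices bearing a $0$), forming a ``block''; write $z:=\pi(i{+}1)$ for the entry immediately to its left, and note $z\notin\{i{+}2,\dots,i{+}k{+}1\}$ (and, since $a_{i+1}$ is not $0$, also $z\ne i{+}1$ unless $z\in\{1,n\}$). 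I will produce a sequence of $2^{k-1}$ displacements, each of which slides some value leftward into position $i{+}1$, that disturbs only positions $i{+}1,\dots,i{+}k{+}1$ and whose net effect is to interchange the value $i{+}k{+}1$ with the entry $z$ while returning $i{+}2,\dots,i{+}k$ to their homes. Granting this, the only values that ever move are those initially in positions $i{+}1,\dots,i{+}k{+}1$, so no other code symbol can change; the symbols of $i{+}2,\dots,i{+}k$ are unchanged since those values finish at home; and the symbol of $z$ is unchanged since $z$ stays throughout within positions $i{+}1,\dots,i{+}k{+}1$, all strictly on one side of its home (or $z\in\{1,n\}$, which carries no symbol). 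Only $a_{i+k+1}$ flips, from $0$ to $-$, so the resulting code is $+^i0^{k-1}-^{j+1}$ and the displacement count is $2^{k-1}$.

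To build the sequence, let the $t$‑th displacement ($t=1,\dots,2^{k-1}$) move the value $i{+}2{+}\nu_2(t)$, where $\nu_2$ is the $2$‑adic valuation, and insert it into position $i{+}1$ (pushing the entries in between one step to the right); this is exactly the first $2^{k-1}$ moves of the standard $k$‑disk Hanoi solution, a ``disk'' being a block value, and is most cleanly organized by the mutual recursion $\Phi_1=$ ``insert the lone block value just left of the block'', $\Phi_m=$ ``slide the bottom $m{-}1$ block values one step left (depositing the old left‑neighbor at the vacated right end) and then insert the top value just left of the block''. A routine induction on the step count establishes, simultaneously, that each displacement is legal — the value it moves sits in its home position at that instant, because the relevant block always lies over its own home positions — that only positions $i{+}1,\dots,i{+}k{+}1$ are ever touched, and that after all $2^{k-1}$ displacements the configuration is precisely the claimed interchange. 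The real work is this bookkeeping: carrying along an exact description of which value occupies which of the $k{+}1$ active positions after each displacement, since that is what certifies both the legality of the next move and the shape of the final permutation; getting this invariant and its base cases right is the main obstacle.

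For uniqueness I would assume $\pi(1)\ne1$ and $\pi(n)\ne n$ — automatic wherever the lemma is used, since by Lemma~\ref{lemma:stage1} each of $1$ and $n$ is displaced only once and none of our displacements restores either — noting that both inequalities persist along our sequence (positions $1$ and $n$ are essentially never touched) and may be assumed of any sequence of $2^{k-1}$ displacements carrying $\pi$ to a code‑$+^i0^{k-1}-^{j+1}$ permutation. Then Lemma~\ref{lemma:weight-up} forces the weight to strictly increase at each of the $2^{k-1}$ steps, and since $w(+^i0^{k-1}-^{j+1})-w(+^i0^k-^j)=2^{k-1}$, the weight must go up by exactly $1$ at every step. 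A short case analysis — invoking Lemma~\ref{lemma:change}, the extremal values of Lemma~\ref{lemma:extremes}, and the fact that every intermediate code is $+^i$ followed by a length‑$k$ word in $\{0,-\}$ followed by $-^j$ — then shows that from each permutation arising, exactly one displacement both raises the weight by $1$ and leaves a code‑$+^i0^{k-1}-^{j+1}$ permutation reachable within the steps still remaining; so the first $2^{k-1}-1$ displacements are forced. At the final step the target weight has already been reached, the only displaceable value is $i{+}k{+}1$, and it may legitimately be inserted at any of the positions $1,\dots,i{+}1$ — which is exactly why that last displacement need not be unique. The second delicate point is establishing that the weight ceiling together with this ``recoverability within the remaining budget'' genuinely pins down a single displacement at each of those first $2^{k-1}-1$ steps.
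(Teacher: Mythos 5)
Your existence half is essentially the paper's own construction: your ruler sequence (the $t$-th displacement evicts $i{+}2{+}\nu_2(t)$ into position $i{+}1$) is exactly the recursive ``short firing'' that the paper builds by induction on $k$, and the locality invariant you defer to ``routine induction'' is the same bookkeeping the paper's recursion carries; that part is fine. The genuine gap is in the uniqueness half, which is where the content of the lemma lies. You correctly reduce, via Lemma~\ref{lemma:weight-up}, to ``the weight must increase by exactly one at every step,'' but from there you only assert that ``a short case analysis'' forces each of the first $2^{k-1}-1$ displacements, and you yourself flag that establishing this is the delicate point --- so the crux is conceded, not proved. What is missing is the concrete mechanism the paper uses: (i) the $+1$-per-step condition forces the code itself to evolve as a binary counter, the leftmost $0$ turning into a $-$ while the $-$'s to its left revert to $0$; and, decisively, (ii) a $-$ at value $r$ can revert to $0$ under a single displacement only if $r$ currently sits at position $r{-}1$, one step left of its home. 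Point (ii) is what turns the forced code-level evolution into a forced permutation-level evolution: the value evicted at each step must be the current leftmost $0$, and its landing position is pinned down not by the immediate code change (landing anywhere $\le i{+}1$ gives the same code) but by the requirement that at the \emph{next} steps the $-$'s must be able to come home, i.e.\ must sit exactly one left of home --- which also explains cleanly why only the final displacement is free. Your substitute criterion, ``raises the weight by $1$ and keeps the target reachable within the remaining budget,'' is the right spirit but is precisely the statement you leave unestablished.

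Two further points. Your assertion that every intermediate code has the form $+^i$ followed by a $\{0,-\}$-word followed by $-^j$ is itself part of what must be proved, not a fact to invoke: for example, when $j=0$ the eviction of $i{+}k{+}1$ to the far right turns the code into $+^i0^{k-1}+$ and raises the weight by exactly $1$ at that step, so ruling out new $+$'s requires a look-ahead argument (of the kind used in the appendix proof of Lemma~\ref{lem:charMn}), not just the one-step weight bound. Also, at the final step the target weight has \emph{not} ``already been reached''---after $2^{k-1}-1$ steps the weight is one short, and the last eviction of $i{+}k{+}1$ supplies the missing unit, with only its landing position ($1,\dots,i{+}1$) free. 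On the positive side, your explicit handling of the hypothesis $\pi(1)\ne1$, $\pi(n)\ne n$ needed for Lemma~\ref{lemma:weight-up} is a good catch that the paper leaves implicit.
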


\begin{proof}
To show existence, we will prove that if
$$
\pi=\pi(1),\dots,\pi(i{+}1),\ul{i{+}2,\dots,i{+}k{+}1},\pi(i{+}k{+}2),\dots,\pi(n)
$$
is a permutation with code $\alpha$ (fixed points have been underlined), we can perform $2^{k-1}$ displacements
and end with the permutation that is obtained from $\pi$ by transposing the
entries $\pi(i{+}1)$ and $i{+}k{+}1$. We proceed by induction on $k$.
The result is trivial for $k=1$. Assume that $k\ge2$.
By the induction hypothesis, we can perform $2^{k-2}$ displacements on
$\pi$ to transpose $\pi(i{+}1)$ with $i{+}k$. Let $\pi'$ be the resulting permutation.
Again via induction, by performing $2^{k-3}$ displacements on $\pi'$ we can transpose
$\pi'(i{+}1)=i{+}k$ and $i{+}k{-}1$.  If we repeat this process, after $2^{k-2}+2^{k-3}+\dots+1$
displacements we obtain the permutation
$$
\pi(1),\dots,\pi(i),i{+}2,i{+}3,\dots,i+k,\pi(i{+}1),\ul{i{+}k{+}1},\pi(i{+}k{+}2),\dots,\pi(n).
$$
Finally, displacing $i{+}k{+}1$ to position $i{+}1$, we obtain
$$
\pi(1),\dots,\pi(i),i{+}k{+}1,\ul{i{+}2,i{+}3,\dots,i{+}k},\pi(i{+}1),\pi(i{+}k{+}2),\dots,\pi(n)$$
as desired. The code of this permutation is $+^i 0^{k-1} -^{j+1}$.

To see uniqueness, note first that the $+$'s and $-$'s in $\alpha=+^i 0^k -^j$ can never
be changed by displacements. Since $w(+^i 0^{k-1} -^{j+1})=w(+^i 0^k -^j)+2^{k-1}$,
we have by Lemma~\ref{lemma:weight-up} that the only way that the code
can evolve from $+^i 0^k -^j$ to $+^i 0^{k-1} -^{j+1}$ in $2^{k-1}$ steps is if the
weight increases by one at each step. This means that at each step, the leftmost
$0$ in the code is changed to a $-$ and the $-$'s to its left are changed back to $0$'s.
For a $-$ to become a $0$ in a displacement step, it must correspond to an entry
$r$ in position $\pi^{-1}(r)=r{-}1$. But this condition can only hold if the sequence
of displacements (except possibly the last one) is the one described above.
Note that the last displacement (of $i{+}k{+}1$) can be done into any position
$\le i{+}1$, so there are $i{+}1$ choices for it.
\end{proof}

We will refer to the sequence of $2^{k-1}$ displacements described in the proof of
Lemma~\ref{lem:carry} as {\em firing $i{+}k{+}1$ to the left}. If the last displacement
(of $i{+}k{+}1$) is done into position $i{+}1$, we call it a {\em short firing of $i{+}k{+}1$ to the left}.
In a symmetric fashion, we can define a {\em firing of $i{+}2$ to the right}, which is called a
{\em short firing} if the last displacement (of $i{+}2$) is into position $i{+}k{+}2$. In this case,
the code changes from $+^i 0^k -^j$ to $+^{i+1} 0^{k-1} -^j$.

\begin{lemma}\label{lem:charMn}
A permutation belongs to $M_n$ if and only if it can be obtained from $\tau_n$
by successively applying $n{-}2$ left and right firings.
\end{lemma}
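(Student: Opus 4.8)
The plan is to establish the two directions separately, using the carry mechanism of Lemma~\ref{lem:carry} as the bridge between "weight increases by exactly 1 per step" (the worst-case condition) and "the permutation is built by firings." For the easy direction, suppose $\pi$ is obtained from $\tau_n$ by a sequence of $n{-}2$ firings. By Lemma~\ref{lem:n_1}, $\h(\tau_n)=2^{n-2}$, i.e.\ $\tau_n$ sits at distance $2^{n-1}-1-(2^{n-2})$... wait — more carefully: $\h$ measures distance from the identity, so $\h(\tau_n)=2^{n-2}$ and we must check that each firing of length $k$ (that is, acting on a block $0^k$) adds exactly $2^{k-1}$ to the height, and that the total telescopes to $2^{n-1}-1$. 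Since the code of $\tau_n$ is $+^0 0^{n-2} -^0$ (all zeros) with one $0$ already "used up" by the displacement of $n$, after the first firing we get a code with one fewer $0$, and each subsequent firing strips one more $0$ while raising the weight by the appropriate power of $2$. Summing $2^{n-2}+2^{n-3}+\dots+2^0 = 2^{n-1}-1$ shows $\h(\pi)=2^{n-1}-1$, so $\pi\in M_n$. The one subtlety here is confirming that firings can indeed be chained — that after firing once, the resulting code really is again of the form $+^i 0^{k-1} -^j$ so that Lemma~\ref{lem:carry} (or its right-handed mirror) applies again; this is exactly what the statements of Lemma~\ref{lem:carry} and the paragraph after it guarantee.

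For the hard direction, suppose $\pi\in M_n$, so $\h(\pi)=2^{n-1}-1$. I would argue by induction on $n$, running the eviction process backward from the identity. The first displacement from the identity must move either $1$ or $n$ (nothing else can be displaced), and by the symmetry noted in Lemma~\ref{lem:n_1} we may assume it moves $n$; after that first step we are homing a permutation on $\{1,\dots,n{-}1\}$, and to achieve the full $2^{n-1}-1$ total this sub-permutation must itself be at maximal height $2^{n-2}-1$, hence in $M_{n-1}$. Actually this framing needs care: $M_n$ is defined via distance from the identity, so I should instead phrase it as: any length-$(2^{n-1}-1)$ sequence of placements ending at the identity, read backward, starts with a displacement of $1$ or $n$; then proceed. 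The key structural claim is that a height-maximal trajectory decomposes as a sequence of firings. Here Lemma~\ref{lemma:weight-up} forces the weight to increase by exactly $1$ at every one of the $2^{n-1}-1$ steps; combined with Lemmas~\ref{lemma:stage1}, \ref{lemma:extremes}, and the carry analysis inside the proof of Lemma~\ref{lem:carry}, "weight goes up by one each step" pins down the code evolution to be exactly a cascade of $0\to-$ (with the $-$'s to the left resetting to $0$) or the mirrored $0\to+$ pattern — which is precisely what a left or right firing does. So the maximal trajectory visits codes $+^{i_0}0^{k_0}-^{j_0}, +^{i_1}0^{k_1}-^{j_1},\dots$ with each $k_{t+1}=k_t-1$, and between consecutive codes the permutation changes exactly as in a firing. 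Running this backward from $\pi$ to $\tau_n$ exhibits $\pi$ as the result of $n{-}2$ firings applied to $\tau_n$.

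The main obstacle I anticipate is the rigidity argument: showing that "the weight increases by exactly $1$ at each step, starting from code $+^i 0^k -^j$ and ending at $+^i 0^{k-1}-^{j+1}$" forces the intermediate permutations (not just their codes) to follow the firing pattern, so that the firing structure is genuinely recovered and not merely the code sequence. The proof of Lemma~\ref{lem:carry} already contains the crucial local observation — that for a $-$ to revert to a $0$ under a displacement, the corresponding value $r$ must sit in position $r{-}1$, which chains back to force the whole block behavior — so the work is to assemble these local constraints across all $2^{n-1}-1$ steps and across the two stages of Lemma~\ref{lemma:stage1}, tracking which "firings to the left" and "firings to the right" occur and in what order, and checking the two worst-case stages (before and after $1$ and $n$ are displaced) stitch together. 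A secondary, more bookkeeping-flavored obstacle is making sure the counting of $0$'s is exact: $\tau_n$ has an all-$0$ code of length $n{-}2$, but the displacement of $n$ (or $1$) that begins the backward trajectory does not itself strip a $0$ from the length-$(n{-}2)$ code — it is "outside" the code — so one must verify that exactly $n{-}2$ firings, each stripping one $0$, exhaust the code and that their lengths $k$ range correctly to make the heights sum to $2^{n-1}-1$.
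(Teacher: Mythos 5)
Your sufficiency direction is essentially the paper's: each firing of a block $0^k$ contributes $2^{k-1}$ displacements, so together with $\h(\tau_n)=2^{n-2}$ (Lemma~\ref{lem:n_1}) one gets $\h(\pi)\ge 2^{n-2}+(2^{n-3}+\cdots+1)=2^{n-1}-1$, and Theorem~\ref{thm:main} turns this into equality. (Your aside that one $0$ of the code is ``used up'' by the displacement of $n$ is off --- positions $1$ and $n$ are never part of the code --- but you correct this later and it does not affect the count.)

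The necessity direction, however, has genuine gaps. First, the claim that ``the first displacement from the identity must move either $1$ or $n$ (nothing else can be displaced)'' is false: in the identity every entry is at home, so any entry may be evicted, and even height-maximal eviction sequences need not start with $1$ or $n$. For $n=3$ the sequence $123\to 213\to 321\to 231$ (evict $2$, then $3$, then $2$) has the maximal length $2^{2}-1=3$ and never displaces $1$ at all; so the induction-on-$n$ reduction ``the first step moves $n$, after which we are in $M_{n-1}$'' does not get off the ground. Second, ``the weight increases by exactly $1$ at every one of the $2^{n-1}-1$ steps'' cannot be right: the weight of a length-$(n-2)$ code is at most $2^{n-2}-1$ (Lemma~\ref{lemma:extremes}), and Lemma~\ref{lemma:weight-up} applies only once both $1$ and $n$ are away from home. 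What is actually needed --- and what your plan never supplies --- is the identification of the permutation at the moment both $1$ and $n$ have left their ends: Lemma~\ref{lemma:stage1} bounds that first phase by $2^{n-2}$ displacements, and Lemma~\ref{lemma:weight-up} bounds the second phase by $2^{n-2}-1-w_0$, where $w_0$ is the weight when the second phase begins; achieving the total $2^{n-1}-1=2^{n-2}+(2^{n-2}-1)$ therefore forces $w_0=0$, i.e.\ all of $2,\dots,n-1$ are home while $1$ and $n$ are not, which pins the intermediate permutation down to exactly $\tau_n$. Only then does the assertion ``$\pi$ is obtained from $\tau_n$ by $2^{n-2}-1$ displacements, each raising the weight by exactly one'' make sense, and only then does the rigidity mechanism you correctly point to (the ``$r$ in position $r-1$'' observation inside the proof of Lemma~\ref{lem:carry}, plus the fact that introducing a symbol of the wrong sign too early forces a later weight jump exceeding one) convert the weight-by-one condition into a decomposition into $n-2$ left and right firings, which is how the paper concludes. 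In short, you name the rigidity issue as the main obstacle, but the missing idea is the two-phase counting argument that produces $\tau_n$ as the forced midpoint; without it the statement to be proved (``obtained \emph{from $\tau_n$}'') is never reached.
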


\begin{proof} See Appendix.
\end{proof}

\begin{corollary}
For $n\ge 2$, $|M_n|\le (n-1)!$.
\end{corollary}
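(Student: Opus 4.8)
The plan is to combine the characterization of $M_n$ in Lemma~\ref{lem:charMn} with the count of choices available in a single firing from Lemma~\ref{lem:carry}. By Lemma~\ref{lem:charMn}, every permutation in $M_n$ is the terminal permutation of some sequence of $n-2$ firings applied to $\tau_n$. Starting from the code $0^{n-2}$ of $\tau_n$, each left firing turns a code $+^i 0^k -^j$ into $+^i 0^{k-1} -^{j+1}$ and each right firing turns it into $+^{i+1} 0^{k-1} -^j$; hence after performing $i$ right firings and $j$ left firings, in any order, the running code is exactly $+^i 0^{\,n-2-i-j} -^j$, and after all $n-2$ firings it has the form $+^i -^j$ with $i+j=n-2$. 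I will bound $|M_n|$ by the number $N(n)$ of \emph{firing sequences}, where a firing sequence records, for each of the $n-2$ firings, whether it is a left or a right firing together with the position used by its last displacement. Since by Lemma~\ref{lem:charMn} the map sending a firing sequence to its terminal permutation is a surjection onto $M_n$, we get $|M_n|\le N(n)$; note that different firing sequences may well produce the same permutation, but this only helps the bound.

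To evaluate $N(n)$, observe that after a partial firing sequence consisting of $i$ right firings and $j$ left firings the running code is $+^i 0^k -^j$ with $k=n-2-i-j$, so by Lemma~\ref{lem:carry} and its mirror image the next firing, if it is a left one, has $i+1$ choices, and if it is a right one has $j+1$ choices. Writing $f(i,j)$ for the number of ways to extend such a partial firing sequence to a complete one, we have $f(i,j)=1$ whenever $i+j=n-2$ and
$$f(i,j)=(i+1)\,f(i,j+1)+(j+1)\,f(i+1,j)$$
otherwise, with $N(n)=f(0,0)$. I would then show, by downward induction on $k=n-2-i-j$, that $f(i,j)$ depends only on $s:=i+j$: the case $s=n-2$ is the base case, and if $f(i',j')=g(s+1)$ for all $i'+j'=s+1$ then $f(i,j)=(i+1)g(s+1)+(j+1)g(s+1)=(s+2)\,g(s+1)$, which indeed depends only on $s$. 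Thus $g(s)=(s+2)\,g(s+1)$ with $g(n-2)=1$, so $g(0)=\prod_{s=0}^{n-3}(s+2)=(n-1)!$ and therefore $|M_n|\le N(n)=g(0)=(n-1)!$. (For $n=2$ there are no firings at all and $M_2=\{\tau_2\}$, which is consistent with $1!=1$.)

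The only genuinely substantive step, beyond the bookkeeping that identifies the state of a partial firing sequence with the pair $(i,j)$ and extracts the counts $i+1$ and $j+1$ from Lemma~\ref{lem:carry}, is the observation that $f(i,j)$ collapses to a function of $i+j$ alone; this is exactly what makes the two-dimensional recursion telescope to a one-dimensional product. I expect the remaining details --- that every one of the $i+1$ (resp.\ $j+1$) continuations again has a code of the form $+^i 0^k -^j$ so that the recursion is well posed, and that the firing-to-permutation map is onto $M_n$ --- to be immediate from Lemmas~\ref{lem:carry} and~\ref{lem:charMn}, so the main work is just the short induction above.
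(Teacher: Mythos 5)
Your proposal is correct and is essentially the paper's own argument: the paper also bounds $|M_n|$ by the number of firing sequences from $\tau_n$ given by Lemma~\ref{lem:charMn}, using the fact from Lemma~\ref{lem:carry} that a left (resp.\ right) firing on a code $+^i 0^k -^j$ admits $i{+}1$ (resp.\ $j{+}1$) choices, so that the $m$th firing contributes a factor $m{+}1$ and the product telescopes to $(n-1)!$. Your explicit recursion $f(i,j)=(i{+}1)f(i,j{+}1)+(j{+}1)f(i{+}1,j)$ is just a more detailed bookkeeping of that same count.
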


\begin{proof}
It follows from Lemma~\ref{lem:charMn}, together with the fact that a left (resp.\ right)
firing on a permutation with code $+^i 0^k -^j$ can be done in $i{+}1$ (resp.\ $j{+}1$) ways.
\end{proof}

Note that a given permutation in $M_n$ may be obtained through different sequences of firings,
so the actual size of $M_n$ will be less than $(n{-}2)!$ in general. We first give an easy lower bound on $|M_n|$.

\begin{prop}
$|M_n|\ge 2^{n-2}$.
\end{prop}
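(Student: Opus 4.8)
The plan is to construct an injection from the set $\{L,R\}^{n-2}$ of binary words of length $n-2$ into $M_n$; since this set has size $2^{n-2}$, the proposition follows. To a word $s=s_1s_2\cdots s_{n-2}$ I associate the permutation $\phi(s)$ obtained from $\tau_n$ by performing, for $t=1,2,\dots,n-2$ in order, a short left firing if $s_t=L$ and a short right firing if $s_t=R$. The code of $\tau_n$ is $0^{n-2}$, and by Lemma~\ref{lem:carry} a firing always keeps the code in the form $+^i 0^k -^j$ while decreasing $k$ by one; hence after $t$ of these firings the code is $+^{i_t} 0^{n-2-t} -^{j_t}$, where $i_t$ is the number of $R$'s and $j_t$ the number of $L$'s among $s_1,\dots,s_t$. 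In particular every firing in the sequence is legal, so by Lemma~\ref{lem:charMn} the final permutation $\phi(s)$ lies in $M_n$. It remains to prove that $\phi$ is injective. More generally, writing $\phi_m(w)$ for the permutation obtained from $\tau_n$ by the short firings dictated by a word $w$ of length $m\le n-2$ (so $\phi=\phi_{n-2}$), I will show each $\phi_m$ is injective.

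The two facts I would extract from the proof of Lemma~\ref{lem:carry} are that, on a permutation of the standard form displayed there with code $+^i 0^k -^j$, a short left firing is exactly the transposition of the entries in positions $i+1$ and $i+k+1$, while a short right firing is the transposition of positions $i+2$ and $i+k+2$; and one checks by an easy induction that every permutation reachable from $\tau_n$ by short firings does have that standard form --- equivalently, that $1$ and $n$ never return to their home positions --- so its fixed points are exactly the positions matching the $0$'s of its code. Now let $\pi=\phi_m(s)$ with $m\ge 1$, and write $+^a 0^k -^b$ for the code of $\pi$, where $a$, $b$, $k$ are read off from the code and satisfy $a+b=m$, $k=n-2-m$. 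The permutation $\phi_{m-1}(s_1\cdots s_{m-1})$ preceding the last firing has code $+^a 0^{k+1} -^{b-1}$ when $s_m=L$ and $+^{a-1} 0^{k+1} -^b$ when $s_m=R$; in either case the last firing transposes positions $a+1$ and $a+k+2$ (directly in the $L$ case, and in the $R$ case because $(a-1)+2=a+1$ and $(a-1)+(k+1)+2=a+k+2$). Hence $\phi_{m-1}(s_1\cdots s_{m-1})$ is simply $\pi$ with the entries in positions $a+1$ and $a+k+2$ swapped. Moreover $s_m$ is determined by $\pi$: in the $L$ case position $a+k+2$ was a fixed point of the preceding permutation, so $\pi(a+1)=a+k+2$; in the $R$ case position $a+k+2$ was not a fixed point, so $\pi(a+1)\ne a+k+2$. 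Thus $s_m=L$ precisely when $\pi(a+1)=a+k+2$.

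Injectivity of $\phi_m$ now follows by induction on $m$, the case $m=0$ being trivial: from $\pi=\phi_m(s)$ one reads its code, hence $a$ and $k$, hence the last letter $s_m$ (by the criterion just displayed) and the permutation $\phi_{m-1}(s_1\cdots s_{m-1})$ (by swapping the two entries), to which the inductive hypothesis applies. The only step needing genuine work is the bookkeeping behind the first sentence of the second paragraph: confirming that a short firing acts as the claimed transposition for every code, including the boundary cases $i=0$ and $j=0$ where an adjacent position holds $n$ or $1$, and verifying that $1$ and $n$ indeed never come home, so that the $0$-block of the code always coincides with the block of fixed points. Everything after that is the short calculation above, requiring nothing beyond Lemmas~\ref{lem:carry} and~\ref{lem:charMn}.
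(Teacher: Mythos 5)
Your construction is exactly the paper's: the $2^{n-2}$ words in $\{L,R\}^{n-2}$, realized as sequences of short left and short right firings applied to $\tau_n$, with membership in $M_n$ supplied by Lemma~\ref{lem:charMn}; what differs is how injectivity is verified. The paper argues \emph{forward}: a short left firing leaves $i{+}k{+}1$ to the left of $\pi(i{+}1)$, a short right firing leaves the opposite relative order (and similarly for the pair $i{+}2$, $\pi(i{+}k{+}2)$), and every subsequent short firing preserves these relative orders, so the two branches at any step can never reconverge. You argue \emph{backward}: reading $a$ and $k$ off the code $+^a 0^k -^b$ of the final permutation, you note that in either case the last firing was the transposition of positions $a{+}1$ and $a{+}k{+}2$, so the preceding permutation is recoverable by swapping them back, and the last letter is determined by whether $\pi(a{+}1)=a{+}k{+}2$; induction on the word length then gives injectivity, and in fact an explicit decoding procedure, which the paper's invariant argument does not make explicit. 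Both routes rest on the same facts from the proof of Lemma~\ref{lem:carry} (a short firing is the stated transposition, and the code evolves as claimed). The one point you defer is genuinely needed in exactly one place: in your $R$-criterion with $b=0$ one has $a{+}k{+}2=n$, the code says nothing about the value $n$, and you must rule out $\sigma(n)=n$; the symmetric issue for the value $1$ never arises in your criterion, since in the $L$ case the relevant position lies in the $0$-block. This gap is easy to close and is already in the paper: as observed in the proof of Lemma~\ref{lemma:stage1}, once $1$ and $n$ have been displaced they can never be shifted back to their ends, and in $\tau_n$ both are already away from home, so no permutation reached from $\tau_n$ by displacements has $n$ in position $n$ (or $1$ in position $1$). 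With that boundary case spelled out rather than left as bookkeeping, your proof is sound.
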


\begin{proof} See Appendix.
\end{proof}

This bound can be improved if we allow any firings to the left but only short firings to the right.
Denote by $B_n$ the $n$th Bell number, which gives the number of partitions of the set $\{1,2,\dots,n\}$.
The asymptotic growth of the Bell numbers is super-exponential. More precisely,
$$B_n\sim\frac{1}{\sqrt{n}}\,\lambda(n)^{n+1/2}e^{\lambda(n)-n-1},$$
where $\lambda(n)=\frac{n}{W(n)}$, and $W$ is the Lambert $W$-function, defined by $W(n)e^{W(n)}=n$.

\begin{theorem}
$|M_n|\ge B_{n-1}$.
\end{theorem}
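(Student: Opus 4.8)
The plan is to single out inside $M_n$ a family of permutations indexed by the partitions of $\{1,\dots,n{-}1\}$. By Lemma~\ref{lem:charMn}, every sequence of $n{-}2$ firings applied to $\tau_n$ lands in $M_n$, so it suffices to select $B_{n-1}$ such sequences whose outputs are pairwise distinct. Following the remark preceding the statement, I would take $\mathcal{A}$ to be the collection of firing sequences in which every \emph{right} firing is short, the \emph{left} firings being arbitrary, and carry out two tasks: (i) show $|\mathcal{A}|=B_{n-1}$; (ii) show that distinct sequences in $\mathcal{A}$ produce distinct permutations.

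For (i) I would build an explicit bijection between $\mathcal{A}$ and the partitions of $\{1,\dots,n{-}1\}$. Encode a set partition sequentially: scan $1,2,\dots,n{-}1$, keeping the blocks created so far ordered by least element; element $1$ opens the first block, and each later $j$ either opens a new block or joins one of the $c$ current blocks ($c$ choices). Match element $j$ with step $t=j{-}1$ of a firing sequence, turning ``$j$ opens a new block'' into a short right firing and ``$j$ joins block number $b$'' into a left firing whose last displacement lands in position $b$. A left firing on a permutation with code $+^{i}0^{k}-^{j}$ offers exactly $i{+}1$ choices for that last displacement, and $i$ equals the number of right firings so far, which is one less than the current number of blocks; so the number of admissible choices matches at every step, the correspondence is a bijection, and $|\mathcal{A}|=B_{n-1}$.

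The crux is (ii). I would recover the entire firing sequence from the output permutation $\sigma$ by reversing the $n{-}2$ firings one at a time, in reverse order; injectivity follows because each reversal is forced by the current permutation. The convenient starting point is that the \emph{last} firing acts on a code with a single $0$, so by the $k=1$ instance of Lemma~\ref{lem:carry} it is a single displacement: writing $+^{a}-^{b}$ for the code of $\sigma$, either entry $a{+}2$ has been moved left out of its home to some position $\le a{+}1$, or entry $a{+}1$ has been moved from home into position $a{+}2$. One checks (using that a permutation on a longest eviction path has neither $1$ nor $n$ at home) that the second case holds exactly when $\sigma(a{+}2)=a{+}1$, and that in the first case the fired entry's landing position must be $\sigma^{-1}(a{+}2)$; either way, undoing that one displacement recovers the permutation present just before the last firing. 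The same idea handles the earlier firings in turn: the most recent surviving firing is the one that last converted a $0$ of the code into a $+$ (the current rightmost $+$) or a $-$ (the current leftmost $-$); which of the two it was is detected from the permutation, its one free parameter — the landing position of the fired entry — is read off as that entry's current position, and Lemma~\ref{lem:carry} then forces the remaining displacements of the reversal. I expect the genuine work to lie here: one must track, firing by firing, how the displaced ``$+$'' entries, the untouched fixed points, and the displaced ``$-$'' entries occupy the positions, and it is exactly the restriction to short right firings that keeps the rightward motion rigid enough to make these reversals unambiguous — the same restriction that keeps $|\mathcal{A}|$ equal to a Bell number rather than something larger.

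Combining (i) and (ii) gives an injection from the partitions of $\{1,\dots,n{-}1\}$ into $M_n$, so $|M_n|\ge B_{n-1}$; with the asymptotics for $B_{n-1}$ recorded above, this produces super-exponentially many worst-case starting permutations.
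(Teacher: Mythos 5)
Your overall plan coincides with the paper's: restrict to firing sequences in which the left firings are arbitrary and the right firings are short, count these sequences via a bijection with the partitions of $\{1,\dots,n{-}1\}$ (your matching of ``join block $b$'' with the $i{+}1$ landing positions of a left firing is exactly the paper's bijection), and then argue that distinct sequences produce distinct permutations of $M_n$. Part (i) is correct. The genuine gap is in part (ii), and you flag it yourself (``I expect the genuine work to lie here''). Your backward-reconstruction scheme requires, at \emph{every} stage, a proved criterion that reads off from the current permutation whether the most recent surviving firing was a left firing or a short right firing, together with the fact that undoing it is then forced. You only sketch this for the very last firing (where the code has a single $0$), and for all earlier firings you simply assert that the type ``is detected from the permutation''---but that assertion is essentially the statement being proved, so as written the argument is unfinished. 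The needed criterion can be supplied: with current code $+^i 0^k -^j$, the most recent firing was a short right firing if and only if the entry in position $i{+}k{+}2$ equals $i{+}1$; verifying this uses that after a left firing that position holds the former $\pi(i{+}1)$, which cannot be $i{+}1$ because $i{+}1$ carried a $+$ in the prior code (and the case $i=0$ needs the fact that $1$ can never return home). None of this bookkeeping appears in your write-up.

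The paper closes the same step by a forward invariant rather than a backward reconstruction, which is shorter: after a left firing of $i{+}k{+}1$ into position $s$, every subsequent left or short right firing keeps $i{+}k{+}1$ to the right of $\pi(s{-}1)$ and to the left of $\pi(s)$, and keeps $i{+}2$ to the left of $\pi(i{+}k{+}2)$; whereas a short right firing leaves $\pi(i{+}k{+}2)$ permanently to the left of $i{+}2$. These preserved relative orders let one read off from the final permutation both the type of each firing and the landing position of each left firing, so the whole sequence is determined and the map from sequences to permutations is injective. If you prefer your reverse-engineering route, it can be made to work, but you must prove the detection lemma above and check it at every stage of the induction; until then part (ii), and hence the bound $|M_n|\ge B_{n-1}$, is not established by your argument.
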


\begin{proof} See Appendix.
\end{proof}

It follows from Lemma~\ref{lem:charMn} that if we allow arbitrary right firings and we record a right
firing of $i{+}2$ into position $s\ge i{+}k{+}2$ by $R_{s-i-k-2}$,
then every sequence of left and right firings applied to $\tau_n$ can be encoded
as a word of length $n{-}2$ on the alphabet $\{R_0,R_1,\dots,L_0,L_1,\dots\}$
with the restriction that every occurrence of  $L_s$ (resp. $R_s$) must have at least $s$ $R_\star$'s
(resp. $L_\star$'s) to its left, for every $s$.
As mentioned above, different such words can produce the same permutation,
due to the fact that sometimes left and right firings commute.
More precisely, we have the relations $L_{t-1}R_s=R_{s-1}L_t$ for every $s,t\ge 1$.
These relations partition the set of words into equivalence classes, one for each permutation in $M_n$.
We can select a canonical representative for each class if we replace each occurrence of $L_{t-1}R_s$
(with $s,t\ge 1$) with $R_{s-1}L_t$, until there are no more occurrences left. For example,
the representative for the class containing $L_0R_1R_0L_1R_2R_1$ is $R_0L_1R_0R_1R_0L_3$,
and the corresponding permutation is $7,6,8,1,3,2,5,4$.
It is not hard to see that regardless of the order in which these replacements are made,
we end with a unique word where no $R_s$ with $s\ge1$ is preceded by an $L_\star$.
We have proved the following result.

\begin{prop}\label{prop:words}
There is a bijection between $M_n$ and the set $W_n$ of words of length $n{-}2$ over the alphabet $\{R_0,R_1,\dots,L_0,L_1,\dots\}$ satisfying:
\begin{enumerate}
\item every occurrence of $L_s$ has at least $s$ $R_\star$'s to its left, for every $s$,
\item every occurrence of $R_s$ has at least $s$ $L_\star$'s to its left, for every $s$, and
\item no $R_s$ with $s\ge1$ is immediately preceded by an $L_\star$.
\end{enumerate}
\end{prop}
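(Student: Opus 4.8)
The plan is to establish Proposition~\ref{prop:words} by combining Lemma~\ref{lem:charMn} with a careful analysis of the rewriting system on firing-words generated by the commutation relations $L_{t-1}R_s = R_{s-1}L_t$ ($s,t\ge 1$). By Lemma~\ref{lem:charMn}, every permutation in $M_n$ arises from some sequence of $n{-}2$ left and right firings applied to $\tau_n$; recording each firing as described, we get a surjection from the set of all admissible firing-words of length $n{-}2$ onto $M_n$. An admissible word is precisely one satisfying conditions (1) and (2): indeed, after a sequence of firings producing a permutation with code $+^i 0^k -^j$, exactly $i$ right firings and $j$ left firings have occurred so far, so the constraint ``a left (resp.\ right) firing can be done in $i{+}1$ (resp.\ $j{+}1$) ways'' translates directly into the requirement that an occurrence of $L_s$ (resp.\ $R_s$) have at least $s$ preceding $R_\star$'s (resp.\ $L_\star$'s). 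Two admissible words produce the same permutation if and only if one can be obtained from the other by the commutation relations, since a left and a right firing commute exactly when their underlying blocks of the code do not interfere, which is exactly the content of $L_{t-1}R_s = R_{s-1}L_t$ (the subscript bookkeeping reflects how the available positions shift). Thus $M_n$ is in bijection with the equivalence classes, and it remains to identify a canonical representative in each class.

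The key combinatorial step is to show that repeatedly applying the rewrite $L_{t-1}R_s \to R_{s-1}L_t$ (for $s,t\ge 1$) to any admissible word terminates in a unique normal form, and that a word is in normal form exactly when it satisfies condition (3), i.e.\ no $R_s$ with $s\ge 1$ is immediately preceded by an $L_\star$. Termination is straightforward: each application strictly decreases, say, the sum over all letters $R_s$ with $s\ge 1$ of the number of $L_\star$'s to their left (the rewrite moves one such $R$ past one $L$ to the left, while only possibly converting it to $R_{s-1}$, which still counts as long as $s-1\ge 1$; the case $s=1$ removes the letter from the count entirely), so the process halts after finitely many steps, and the terminal words are exactly those with no remaining occurrence of the pattern $L_{t-1}R_s$ with $s,t\ge1$ — equivalently, no $R_s$ with $s\ge1$ immediately preceded by an $L_\star$, which is condition (3). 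Confluence — uniqueness of the normal form — can be obtained via the diamond lemma: one checks that the local ambiguities (overlapping occurrences of the rewrite pattern, e.g.\ in a word segment $L_{t-1}R_s$ where $R_s$ itself is immediately followed by nothing problematic, or configurations $L_{u-1}L_{t-1}R_s$ and $L_{t-1}R_sR_{v}$) resolve to a common word after further rewrites. Since the relations only involve two adjacent letters and the subscript arithmetic is consistent, each critical pair closes; combined with termination, Newman's lemma gives a unique normal form per class.

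Finally, I would verify that the normal-form words are themselves admissible (conditions (1) and (2) are preserved by the rewrite, since moving an $R$ left past an $L$ and decrementing its subscript by one keeps the count of preceding $R_\star$'s for any later $L$ unchanged and only relaxes the constraint on the moved letter, and symmetrically for the decremented $L$) and conversely that every word in $W_n$ is its own normal form (immediate, as (3) says no rewrite applies) and hence is reached by the normalization procedure from some admissible word (namely itself). This yields a well-defined map $M_n \to W_n$ sending a permutation to the normal form of any of its firing-words, with inverse $W_n \to M_n$ sending a word to the permutation it produces; the two maps are mutually inverse because normalization does not change the resulting permutation (the relations are genuine equalities of firing operations) and distinct elements of $W_n$ produce distinct permutations (if two words in $W_n$ gave the same permutation they would be equivalent, hence have the same normal form, hence be equal). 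The worked example $L_0R_1R_0L_1R_2R_1 \mapsto R_0L_1R_0R_1R_0L_3$ can be checked by hand to confirm the arithmetic of the rewrites.

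The main obstacle I anticipate is the confluence argument: verifying that all critical pairs of the rewriting system close requires a somewhat fussy case analysis of overlapping and adjacent occurrences of the pattern $L_{t-1}R_s$, keeping the subscript arithmetic straight through chains of rewrites, and one must be careful about boundary cases where a subscript drops to $0$ and the letter ``leaves'' the active set. An alternative route that sidesteps explicit confluence is to define the canonical representative directly — e.g.\ as the unique admissible word in the class with no $R_{\ge 1}$ preceded by $L_\star$, proved unique by a direct induction on word length — but this essentially repeats the same bookkeeping in a different guise.
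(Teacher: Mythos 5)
Your proposal follows essentially the same route as the paper: encode firing sequences via Lemma~\ref{lem:charMn} as words satisfying (1) and (2), use the commutation relations $L_{t-1}R_s=R_{s-1}L_t$ to identify words yielding the same permutation, and take as canonical representative the normal form characterized by condition (3). The only difference is one of detail: you spell out termination and confluence of the rewriting (via a decreasing statistic and Newman's lemma) where the paper simply asserts that the normal form is unique ``regardless of the order in which these replacements are made,'' and, like the paper, you leave the completeness of the commutation relations (that \emph{all} coincidences of firing-words arise from them) at the level of an assertion.
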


For $i,j\ge1$, let $f_{i,j}=|\{\pi\in M_{i+j}:\alpha(\pi)=+^{i-1}-^{j-1}\}|$.
Let $F(u,v)=\sum_{i,j\ge1} f_{i,j} \, u^iv^j$.

\begin{theorem} The generating function $F(u,v)$ satisfies the following partial differential equation:
$$F(u,v)=uv+uv\frac{\partial}{\partial u}F(u,v)+uv\frac{\partial}{\partial v}F(u,v)-u^2v^2\frac{\partial^2}{\partial u\partial v}F(u,v).$$
\end{theorem}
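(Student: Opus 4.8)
The plan is to pass to the word model of Proposition~\ref{prop:words} and then to derive the partial differential equation by the standard device of peeling off the last letter of a word. \emph{Step 1 (reduce $f_{i,j}$ to a word count).} By Lemma~\ref{lem:carry} and the remarks after it, firing to the left turns a code $+^a0^b-^c$ into $+^a0^{b-1}-^{c+1}$ and firing to the right turns it into $+^{a+1}0^{b-1}-^c$, independently of the subscripts; hence applying a word with $i{-}1$ letters $R_\star$ and $j{-}1$ letters $L_\star$ to $\tau_{i+j}$ (whose code is $0^{\,i+j-2}$) produces a permutation with code $+^{i-1}-^{j-1}$. Combined with Lemma~\ref{lem:charMn} and Proposition~\ref{prop:words}, this shows that $f_{i,j}$ equals the number of admissible words (those satisfying conditions~1--3 of Proposition~\ref{prop:words}) having exactly $i{-}1$ symbols $R_\star$ and $j{-}1$ symbols $L_\star$. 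Thus, writing $G(u,v)=\sum_w u^{\#R(w)}v^{\#L(w)}$ for the generating function of \emph{all} admissible words of every length (the empty word included), we have $F(u,v)=uv\,G(u,v)$.

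\emph{Step 2 (a system for the last letter).} Write $G=1+A+B$, where $A$ (resp.\ $B$) enumerates the nonempty admissible words ending in a symbol $R_\star$ (resp.\ $L_\star$). Deleting the last letter is a bijection from nonempty admissible words onto admissible words; the crucial observation is that conditions~1--3 are prefix-closed, and the set of letters that may legally be appended to an admissible word $w$ depends only on $\#R(w)$, $\#L(w)$, and on whether $w$ ends in $R_\star$, ends in $L_\star$, or is empty. Explicitly, one may always append any of $R_0,L_0,L_1,\dots,L_{\#R(w)}$, and in addition $R_1,\dots,R_{\#L(w)}$ precisely when $w$ does not end in $L_\star$ (this is condition~3). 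Translating ``append $L_s$ with $0\le s\le\#R$'' into the operator $A\mapsto v\,\partial_u(uA)=vA+uvA_u$, and ``append $R_s$ with $0\le s\le\#L$'' into $A\mapsto u\,\partial_v(vA)=uA+uvA_v$, the bijection yields
$$A=u+uA+uvA_v+uB,\qquad B=v+vA+uvA_u+vB+uvB_u.$$

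\emph{Step 3 (elimination).} From the first equation, $A=u(1+A+B)+uvA_v=uG+uvA_v$. From the second, after substituting $B=G-1-A$ and simplifying, $A=(1-v)G-1-uvG_u$. Differentiating this last identity with respect to $v$ and inserting the result into $A=uG+uvA_v$ gives a single second-order linear PDE satisfied by $G$; finally, putting $G=F/(uv)$ and clearing denominators produces exactly
$$F=uv+uv\,F_u+uv\,F_v-u^2v^2\,F_{uv}.$$

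The main obstacle is the bookkeeping forced by condition~3: because a symbol $R_s$ with $s\ge1$ may not immediately follow $L_\star$, the enumeration cannot be run through a single generating function, and one must carry the pair $(A,B)$ — equivalently, keep track of the last letter — all the way through the computation. Once the correct bivariate decomposition in Step~2 is in place, Steps~1 and~3 are routine (the elimination in Step~3 being a short, if slightly fiddly, calculation with the substitution $G=F/(uv)$).
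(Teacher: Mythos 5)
Your argument is correct, and it is built on the same foundation as the paper's proof: the word model of Proposition~\ref{prop:words} together with the device of peeling off the last letter of an admissible word. Where you differ is in how condition~3 is bookkept. The paper collapses everything into the single recurrence $f_{i,j}=i\,f_{i,j-1}+j\,f_{i-1,j}-(i-1)(j-1)f_{i-1,j-1}$ (equation~(\ref{eq:recurrence})): append any of the $i$ legal letters $L_\star$ or any of the $j$ legal letters $R_\star$, then subtract the forbidden appends, namely the $(i-1)f_{i-1,j-1}$ words ending in an $L_\star$, each of which loses the $j-1$ letters $R_1,\dots,R_{j-1}$; the PDE is then a mechanical translation of this recurrence. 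You instead carry the last-letter state explicitly, splitting $G=1+A+B$ according to whether a word is empty, ends in $R_\star$, or ends in $L_\star$, and eliminating $A$. Your system is right: the operators $v\,\partial_u(u\,\cdot)$ and $u\,\partial_v(v\,\cdot)$ correctly encode the $\#R+1$ legal appends of $L_s$ and the $\#L+1$ legal appends of $R_s$, and condition~3 is exactly the restriction of the $R$-append to $R_0$ after a word counted by $B$. I checked the elimination: from $A=uG+uvA_v$ and $A=(1-v)G-1-uvG_u$, differentiating the latter in $v$, substituting, and setting $G=F/(uv)$ does reduce precisely to $F=uv+uvF_u+uvF_v-u^2v^2F_{uv}$, so the sketched Step~3 is sound (though you should display that computation rather than only assert it, since it is the only place where the exact form of the PDE emerges). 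The trade-off between the two routes: the paper's inclusion–exclusion is shorter and produces the explicit recurrence~(\ref{eq:recurrence}), which is reused later in the asymptotic discussion of $|M_n|$; your transfer-matrix style decomposition avoids the subtraction argument and makes the role of condition~3 structurally transparent, at the cost of a short elimination.
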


\begin{proof}
By Proposition~\ref{prop:words}, $f_{i,j}$ is the number of words in $W_{i+j}$ with $i{-}1$ $R_\star$'s and $j{-}1$ $L_\star$'s. Let $X_{i,j}\subseteq W_{i+j}$ denote this set.
We will show that, for $i,j\ge1$ with $i+j\ge3$, the numbers $f_{i,j}$ satisfy the recurrence
\begin{equation}\label{eq:recurrence} f_{i,j}=i f_{i,j-1}+j f_{i-1,j}-(i-1)(j-1)f_{i-1,j-1},\end{equation}
where we define $f_{i,j}=0$ whenever $i=0$ or $j=0$. The other initial condition is $f_{1,1}=1$, which corresponds to the empty word.


Let $i,j\ge1$ with $i+j\ge2$, and let $w\in X_{i,j}$. If the last letter of $w$ is an $L_\star$, then deleting it we get a word in $X_{i,j-1}$. Conversely, given a word in $X_{i,j-1}$, we can append to
it any letter $L_s$ with $0\le s\le i-1$ to obtain a word in $X_{i,j}$. This explains the term $i f_{i,j-1}$ in (\ref{eq:recurrence}).

If the last letter of $w$ is an $R_\star$, then deleting it we get a word in $X_{i-1,j}$. For the converse we have to be a little more careful. Given a word in $X_{i-1,j}$, we can append to
it any letter $R_s$ with $0\le s\le j-1$ to obtain a word in $X_{i,j}$ (this gives the term $j f_{i-1,j}$), except if the word in $X_{i-1,j}$ ends with an $L_\star$.
In this case, we are not allowed to append any $R_s$ with $1\le s\le j-1$, since that
would violate the third condition in the definition of $W_n$. The count the number of forbidden situations, observe that there are  $(i-1) f_{i-1,j-1}$ words in $X_{i-1,j}$ ending
with an $L_\star$, and to each one of them we could append an $R_s$ with $1\le s\le j-1$. This is why we subtract $(i-1)(j-1)f_{i-1,j-1}$ in~(\ref{eq:recurrence}).

Using equation~(\ref{eq:recurrence}) for the coefficients of $F(u,v)$ we get
\begin{multline*}
F(u,v)=uv+\sum_{i+j\ge 3} f_{i,j} \, u^iv^j\\
= uv+ uv \sum_{i+j\ge 3} i f_{i,j-1}\,u^{i-1} v^{j-1}+ uv \sum_{i+j\ge 3} j f_{i-1,j}\,u^{i-1}v^{j-1}- u^2v^2 \sum_{i+j\ge 3} (i-1)(j-1)f_{i-1,j-1}\,u^{i-2}v^{j-2}\\
= uv+uv \sum_{i,j\ge 1} i f_{i,j}\,u^{i-1} v^{j}+ uv \sum_{i,j\ge 1} j f_{i,j}\,u^{i}v^{j-1}- u^2v^2 \sum_{i,j\ge 1} i j f_{i,j}\,u^{i-1}v^{j-1}\\
= uv+uv \frac{\partial}{\partial u}F(u,v)+ uv \frac{\partial}{\partial v}F(u,v) - u^2v^2 \frac{\partial^2}{\partial u\partial v}F(u,v).
\end{multline*}
\end{proof}

Note that $|M_n|$ is the coefficient of $t^n$ in $F(t,t)$. The first few values of this sequence are $1,2,5,16,62,280,1440,8296,52864,\dots$.

\section{Conclusions}\label{sec:final}

The ``homing'' sort proposed by Cipra and Larson is a natural way to put
a permutation in order, and does work---eventually.  While $n{-}1$ well-chosen steps
will always succeed (with only one worst-case permutation), poorly-chosen steps
lead---for super-exponentially many permutations of $\{1,\dots,n\}$---to the precise
maximum number of steps, namely $2^{n-1}-1$.

The asymptotic behavior of the number of worst-case permutations seems to be strictly between factorial growth and the growth of Bell numbers.
If Figure~\ref{fig:growth} we have plotted the graphs of $(n-1)!^{1/n}$, $|M_n|^{1/n}$, and $B_{n-1}^{1/n}$, for $n\le 80$. It is known
that $(n-1)!^{1/n}\sim \frac{n}{e}$ and $B_{n-1}^{1/n}\sim \frac{n}{e\, W(n)}$, where $W$ is the $W$-Lambert function. Thomas Prellberg~\cite{Pr}
conjectures that $|M_n|^{1/n}\sim \frac{n}{2e}$. He argues that when $i\approx j$, (\ref{eq:recurrence}) suggests that $f_{i,j}$ can be approximated by $g_{i+j}$, where $g_n$ satisfies
the recurrence
$$g_{n+1}=n\,g_n-\frac{n^2}{4}\,g_{n-1},$$ from where the asymptotic behavior follows.

\begin{figure}[htbp]
$$\epsfig{file=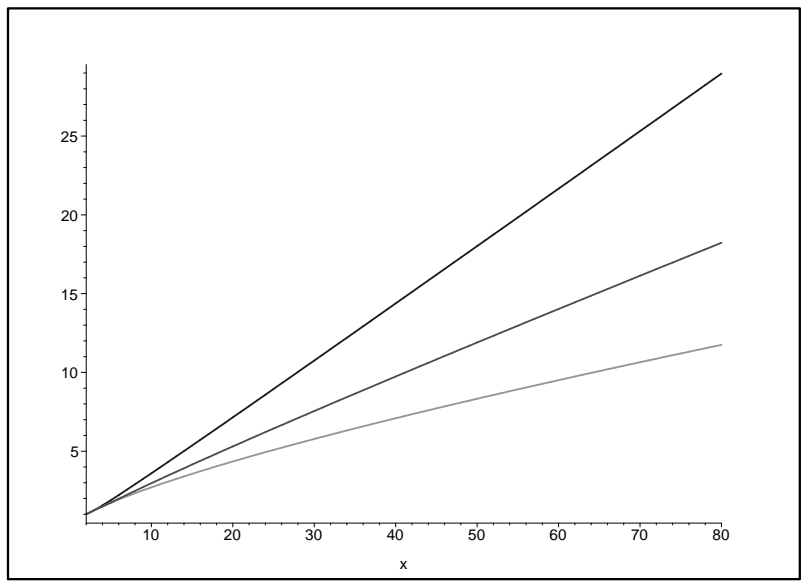,height=300pt,angle=-90}$$
\caption{The graphs of $(n-1)!^{1/n}$, $|M_n|^{1/n}$, and $B_{n-1}^{1/n}$, from top to bottom.}\label{fig:growth}
\end{figure}

We leave the calculation of the exact number of worst-case permutations, and
the precise behavior of homing (optimal, pessimal or random) on random permutations, to others.

\pagebreak

\section{Appendix}

Here we provide the (relatively straightforward) proofs of Lemma 6.3, Proposition 6.5 and Theorem 6.6.

\bigskip

\noindent
{\bf Lemma 6.3}. {\it A permutation belongs to $M_n$ if and only if it can be obtained from $\tau_n$
by successively applying $n{-}2$ left and right firings.}

\begin{proof}
Let us first show sufficiency. Note that $\alpha(\tau_n)=0^{n-2}$. The first
firing transforms this code into $0^{n-3}-$ or $+0^{n-3}$ using $2^{n-3}$ displacements.
The second firing uses $2^{n-4}$ displacements, and so on. After $2^{n-3}+2^{n-4}+\dots+1=2^{n-2}-1$ displacements,
we end with a permutation $\sigma$ with code $+^k -^{n-2-k}$ for some $k$. By Lemma~\ref{lem:n_1},
$\h(\sigma)\ge 2^{n-2}+2^{n-2}-1=2^{n-1}-1$, and by Theorem~\ref{thm:main} this is an equality, so $\sigma\in M_n$.

Conversely, by Lemmas~\ref{lemma:stage1}~and~\ref{lemma:weight-up}, any permutation of height
$2^{n-1}-1$ has to be obtained from $\tau_n$ by performing $2^{n-2}-1$ displacements,
each one increasing the weight by one.  If the first displacement on $\tau_n$ introduces
a $-$ to the code, then the first $2^{n-3}$ displacements must constitute a left firing.
Otherwise, either one of these displacements would increase the weight by more than one,
or a $+$ would be introduced before the code is $0^{n-3}-$, which would cause the weight
to increase by more than one at a later displacement. Therefore, the first $2^{n-3}$
displacements on $\tau_n$ must constitute a right or a left firing.
Repeating this argument, the same is true for the the next $2^{n-4}$ displacements, and so on.
\end{proof}

\bigskip

{\noindent}
{\bf Proposition 6.5}. $|M_n|\ge 2^{n-2}$.

\begin{proof}
We show that if we start from $\tau_n$ and perform only short firings, then no permutation is obtained in more than one way.
To see this, consider
$$
\pi=\pi(1),\dots,\pi(i{+}1),\ul{i{+}2,\dots,i{+}k{+}1},\pi(i{+}k{+}2),\dots,\pi(n)
$$
with $\alpha(\pi)=+^i 0^k -^{n{-}2{-}i{-}k}$. If we perform a short firing of $i{+}k{+}1$ to the left, we obtain
$$
\pi(1),\dots,\pi(i),i{+}k{+}1,\ul{i{+}2,\dots,i{+}k},\pi(i{+}1),\pi(i{+}k{+}2),\dots,\pi(n).
$$
Regardless of what short firings we perform after this,
$i{+}k{+}1$ will always remain to the left of $\pi(i{+}1)$, and $i{+}2$ will always remain to the left of $\pi(i{+}k{+}2)$.
However, if we had instead performed on $\pi$ a short firing of $i{+}2$ to the right,
then we would have obtained
$$
\pi(1),\dots,\pi(i{+}1),\pi(i{+}k{+}2),\ul{i{+}3,\dots,i{+}k{+}1},i{+}2,\pi(i{+}k{+}3),\dots,\pi(n),
$$
and any subsequent short firings on this permutation would preserve the relative position
of $\pi(i{+}1)$ to the left of $i{+}k{+}1$, and $\pi(i{+}k{+}2)$ to the left of $i{+}2$.
It follows that each of the $2^{n-2}$ possible sequences of short left and right firings
that can be applied to $\tau_n$ results in a different permutation.
\end{proof}

\bigskip

\noindent
{\bf Theorem 6.6}. $|M_n|\ge B_{n-1}$.

\begin{proof}
Let $P_n\subseteq M_n$ be the set of permutations that can be obtained from $\tau_n$ by performing
a sequence of $n{-}2$ arbitrary firings to the left and short firings to the right.
For a permutation $\pi$ as in the above proof, a firing of $i{+}k{+}1$ to the left results in
$$
\pi(1),\dots,\pi(s{-}1),i{+}k{+}1,\pi(s),\dots,\pi(i),\ul{i{+}2,\dots,i{+}k},\pi(i{+}1),\pi(i{+}k{+}2),\dots,\pi(n),
$$
for some $1\le s\le i{+}1$. After this firing, any sequence of firings to the left and short firings to the right
will leave $i{+}2$ to the left of $\pi(i{+}k{+}2)$, and will preserve the fact that $i{+}k{+}1$
lies to the right of $\pi(s{-}1)$ (if $s\ge2$) and to the left of $\pi(s)$.
On the other hand, if we had instead performed on $\pi$ a short firing of $i{+}2$ to the right,
then any further firings would leave $\pi(i{+}k{+}2)$ to the left of $i{+}2$. Thus,
every permutation $\sigma\in P_n$ uniquely determines the sequence of left and short right firings
that have to be applied to $\tau_n$ in order to obtain $\sigma$.

Next we will determine how many such sequences there are. After performing $j$ left firings and $i$ right firings on $\tau_n$, the code of the resulting
permutation is $+^i 0^k -^j$. If we now fire $i{+}k{+}1$ to the left, we have $i{+}1$ choices for the position $s$ to which
the entry $i{+}k{+}1$ is displaced.  Recording a left firing into position $s$ by $L_{i+1-s}$ and
a short right firing by $R$, each permutation in $P_n$ can be encoded uniquely as a word of length $n{-}2$ on the alphabet
$\{R,L_0,L_1,\dots\}$ with the restriction that every occurrence of $L_s$ must have at least $s$ $R$'s preceding it, for every $s$.

We claim that the number of such words containing $k$ $R$'s equals the number of partitions of $\{1,2,\dots,n{-}1\}$ with $k{+}1$ blocks,
for every $0\le k\le n{-}2$. Here is a bijection between the two sets.
Suppose that after reading the first $m{-}1$ letters of the word we have constructed a partition of $\{1,2,\dots,m\}$
with $i{+}1$ blocks (which we can assume are ordered by increasing smallest element),
where $i$ is the number of $R$'s read so far. If the $m$th letter is an $L_s$,
we add element $m{+}2$ to the $(s{+}1)$st block; if it is an $R$, we put $m{+}2$ in a separate new block.
This proves that $|P_n|=B_{n-1}$.
\end{proof}

\end{document}